\documentclass[12pt,reqno]{amsart}
\usepackage{amssymb,latexsym}
\usepackage{amsmath}
\usepackage{amsthm}
\usepackage{graphicx}
\usepackage{hyperref}
\usepackage{titletoc}
\usepackage{pdfsync}
\usepackage{color}
\numberwithin{equation}{section}
\newtheorem{theorem}{Theorem}[section]

\newtheorem{lemma}[theorem]{Lemma}
\newtheorem{corollary}[theorem]{Corollary}

\usepackage{cite}

\theoremstyle{definition}

\def\R{{\mathfrak R}}

\def\begeq{\begin{equation}}
\def\endeq{\end{equation}}

\def\R{\Bbb R}

\allowdisplaybreaks

\begin{document}

\title[Some semilinear elliptic equations with Robin boundary condition]{
Uniqueness and nondegeneracy of positive solutions to  elliptic equations with Robin boundary conditions}

\author{Mengyao Chen}
\address{Mengyao Chen, School of Mathematics and Systems Science \& Hubei Province Key Laboratory of Systems Science in Metallurgical Process, Wuhan University of Science and Technology, Wuhan 430065, P. R. China.}
\email{cmy@mails.ccnu.edu.cn}

\author{Massimo Grossi}
\address{Massimo Grossi, Dipartimento di Matematica, Universit\`a di Roma ``La Sapienza'', P.le A. Moro 2, 00185 Roma, Italy}
\email{massimo.grossi@uniroma1.it}

\author{Qi Li}
\address{Qi Li, School of Mathematics and Systems Science \& Hubei Province Key Laboratory of Systems Science in Metallurgical Process, Wuhan University of Science and Technology, Wuhan 430065, P. R. China.}
\email{qili@mails.ccnu.edu.cn}

\begin{abstract}
In this paper, we study the uniqueness of positive solutions to the semilinear elliptic Robin problem
\begin{equation*}
\begin{cases}
-\Delta u = u^p, & \text{in } \Omega,\\
u > 0, & \text{in } \Omega,\\
\frac{\partial u}{\partial \nu} + \beta u = 0, & \text{on } \partial \Omega,
\end{cases}
\end{equation*}
where $\beta > 0$, $p$ is subcritical, and $\Omega$ is a bounded smooth domain. 
It is known that the uniqueness of the solution depends on the shape of the domain. Even if $\Omega$ is a ball, the problem is open for arbitrary $\beta>0$, since the method of moving planes does not work for Robin boundary conditions . By scaling arguments and a careful analysis of the linearized problem, we prove uniqueness for any $\beta>0$ provided that $p$ and $\Omega$ satisfy suitable conditions. Finally,  we study the effects of concave and convex nonlinearities.
\medskip

\noindent\textbf{Keywords:} elliptic equation; Robin boundary conditions; uniqueness.

\medskip

\noindent\textbf{2020 Mathematics Subject Classification:} 35J20.

\end{abstract}

\date{}
\maketitle

\section{Introduction and main results}
Let $\Omega$ be a bounded smooth domain in $\R^N$, with $N\ge2$, and let $\nu$ denote the unit outward normal to $\partial\Omega$. We study the uniqueness of positive solutions to the following problem 
\begin{equation}\label{eq0.1}
\begin{cases}
-\Delta u=u^p,\quad &\text{in}\ \Omega,\\
u>0, &\text{in}\ \Omega,\\
\frac{\partial u}{\partial \nu}+\beta u=0, &\text{on}\ \partial \Omega,
\end{cases}
\end{equation}
where $\beta>0$ is a parameter and $p$ is subcritical, namely, 
$$
1<p<\infty \quad \text{if}\ N=2,\quad 1<p<\frac{N+2}{N-2}\quad \text{if}\ N\ge 3.
$$
We work in the Sobolev space
\[
H^1(\Omega):=\{u\in L^2(\Omega): \nabla u\in L^2(\Omega)\},
\]
endowed with the norm
\[
\Vert u\Vert=\left(\int_{\Omega}\bigl(|\nabla u|^2+u^2\bigr)\,dx\right)^{1/2}.
\]
For each fixed $\beta>0$, we say that $u\in H^1(\Omega)$ is a weak solution of \eqref{eq0.1} if
\[
\int_{\Omega}\nabla u\nabla v\,dx+\beta\int_{\partial\Omega}uv\,dS
=\int_{\Omega}u^p v\,dx,
\qquad \forall v\in H^1(\Omega).
\]
In order to find a weak solution to \eqref{eq0.1}, it is enough to look for nontrivial critical points of the functional
\[
J_\beta(u)=\frac12\int_{\Omega}|\nabla u|^2\,dx+\frac{\beta}{2}\int_{\partial\Omega}u^2\,dS
-\frac{1}{p+1}\int_{\Omega}u_+^{p+1}\,dx.
\]
The corresponding solution $u_\beta$ is a weak solution and, by elliptic regularity, a classical one.

\subsection{Background}
Robin boundary conditions interpolate between the classical Dirichlet and Neumann cases, which formally correspond to the limiting regimes $\beta=\infty$ and $\beta=0$. There is a vast literature on these two problems. The existence of solutions and their qualitative properties, such as uniqueness, nondegeneracy, and asymptotic behavior, have been widely investigated over the last few decades. Given the breadth of the subject, we do not attempt a complete survey and refer the reader, for instance, to \cite{AY,BN,BO,GGP,GILY,GNN,GST,W,Z} and the references therein.

Compared with the Dirichlet and Neumann problems, the Robin case has been less extensively studied, although several existence and qualitative results are available in the literature. We refer, among others, to \cite{CGL,D1,D2,DF,W,vB,GS,F,BBBT} for results concerning eigenvalue estimates, existence, uniqueness, and asymptotic properties under Robin boundary conditions.

\subsection{Known results}
We first review the results for the case $p=0$, namely, 
\begin{equation}\label{1.2}
\begin{cases}
-\Delta u=1,\quad &\text{in}\ \Omega,\\
\frac{\partial u}{\partial \nu}+\beta u=0, &\text{on}\ \partial \Omega.
\end{cases}
\end{equation}
For each $\beta>0$, the positive solution is unique by the maximum principle. Assuming that $u_\beta$ is the unique solution of \eqref{1.2}, the estimate 
\begin{equation*}
\lambda_{1,\beta}^{-1}\leq \|u_\beta\|_{L^\infty(\Omega)}\leq
6N\lambda_{1,\beta}^{-1}\log\bigl(2^{11}3\sqrt{3}N(1+\beta^{-1}\sqrt{\lambda_{1,\beta}})\bigr)
\end{equation*}
was proved in \cite{vB}, where $\lambda_{1,\beta}$ is the first eigenvalue of the Robin eigenvalue problem.

The range $0<p<1$ corresponds to a sublinear nonlinearity and essentially exhibits the same phenomena as the case $p=0$. The authors in \cite{CGL} proved the uniqueness of positive solutions for each $\beta>0$. In addition, they established the exact leading-order asymptotics of the unique positive solution:
\[
u_\beta=
|\Omega|^{\frac{1}{1-p}}|\partial\Omega|^{-\frac{1}{1-p}}
\beta^{-\frac{1}{1-p}}
+o\bigl(\beta^{-\frac{1}{1-p}}\bigr),
\qquad \text{as }\beta\to0,
\]
where $|\Omega|$ and $|\partial\Omega|$ denote the volume of $\Omega$ and the surface measure of $\partial\Omega$, respectively.

When $p=1$, the problem \eqref{eq0.1} is closely related to the Robin eigenvalue problem 
\begin{equation}\label{1.3}
\begin{cases}
-\Delta u=\lambda_{1,\beta}u,\quad &\text{in }\Omega,\\
u>0,\quad &\text{in }\Omega,\\
\frac{\partial u}{\partial \nu}+\beta u=0,\quad &\text{on }\partial\Omega.
\end{cases}
\end{equation}
By the variational characterization 
$$
\lambda_{1,\beta}=\inf_{u\in H^1(\Omega)\backslash\{0\}}\frac{\int_{\Omega}|\nabla u|^2dx+\beta\int_{\partial\Omega}u^2dS}{\int_{\Omega}u^2dx},
$$
it is clear that $\lambda_{1,\beta}$ is non-decreasing with respect to $\beta$. One can easily prove that 
$$
\lim_{\beta\to0}\lambda_{1,\beta}=0\quad \lim_{\beta\to\infty}\lambda_{1,\beta}=\lambda_{1,\infty},
$$
where $\lambda_{1,\infty}$ is the first eigenvalue of the Dirichlet eigenvalue problem. It was proved in \cite{GS} that 
$$
\lim_{\beta\to0}\frac{\lambda_{1,\beta}}{\beta}=\frac{|\partial\Omega|}{|\Omega|}.
$$
If $\lambda_{i,\infty}$ is simple and $\phi_{i,\infty}$ denotes the unique (up to a sign) corresponding $L^2(\Omega)$-normalized eigenfunction, 
the first-order expansion of the eigenvalue 
$$
\lambda_{i,\beta}=\lambda_{i,\infty}-\frac{1}{\beta}\int_{\Omega}\Big(\frac{\partial \phi_{i,\infty}}{\partial\nu}\Big)^2dS+o\Big(\frac{1}{\beta}\Big),\quad \text{as}\ \beta\to\infty,
$$
was obtained by Filinovskiy in \cite{F}. Later, the second-order expansion of the eigenvalue was established in \cite{BBBT} through a functional-analytic approach. Recently, Ognibene \cite{O} developed a new approach, combining purely variational methods and singular perturbation theory, to prove the first-order expansion of any Robin eigenvalue under the sole Lipschitz regularity assumptions on the domains.

For subcritical exponents, one may consider the minimization problem 
\begin{equation}\label{1.4}
I_\beta=\inf_{u\in H^1(\Omega)\setminus\{0\}}
\frac{\int_{\Omega}|\nabla u|^2\,dx+\beta\int_{\partial\Omega}u^2\,dS}
{\left(\int_{\Omega}|u|^{p+1}\,dx\right)^{\frac{2}{p+1}}}.
\end{equation}
By arguments similar to those in \cite{Z}, one can verify that $I_\beta$ is achieved due to the compactness of the Sobolev embeddings, and a suitable scaling then yields a positive solution of \eqref{eq0.1}. 
Compared with the sublinear case, one may ask:\\
{\em Are positive solutions to equation \eqref{eq0.1} still unique for each $\beta>0$?}\\ 
In general, it is well known that uniqueness depends on the shape of the domain. But for the Robin problem, this remains open for arbitrary $\beta>0$, even if $\Omega$ is a ball. Indeed, the method of moving planes does not work for the Robin problem. Hence, in order to solve this uniqueness problem, we need a new approach to replace the classical moving planes.
In \cite{CGL}, the authors employed the scaling arguments of \cite{GS1} and proved the uniqueness of positive solutions for small $\beta$; similar results can also be found in \cite{DF,FD}. In addition, if $u_\beta$ is a positive solution, the exact leading-order asymptotic estimate 
\begin{equation}\label{1.5}
u_\beta=
|\Omega|^{-\frac{1}{p-1}}|\partial\Omega|^{\frac{1}{p-1}}
\beta^{\frac{1}{p-1}}
+o\bigl(\beta^{\frac{1}{p-1}}\bigr),
\qquad \text{as }\beta\to0.
\end{equation}
was established in \cite{CGL}.  

For the critical case, since the embedding $H^1(\Omega)\hookrightarrow L^{2^*}(\Omega)$ is no longer compact, the functional $J_\beta$ does not satisfy the Palais--Smale condition globally. In \cite{W}, Wang proved that $J_\beta$ satisfies the $(PS)_c$ condition provided
\[
0<c<\frac{1}{2N}\mathcal{S}^{N/2},
\]
where $\mathcal{S}$ denotes the best Sobolev constant for the embedding $D^{1,2}(\R^N)\hookrightarrow L^{2^*}(\R^N)$. As a consequence, one can prove that equation \eqref{eq0.1} admits a ground state solution if $\beta$ is small. In \cite{CGL}, the authors proved the uniqueness of the ground state solution for small $\beta$. In addition, they showed that the unique ground state solution has an asymptotic behavior similar to \eqref{1.5}.

\subsection{Main results}
According to the previous discussion, existence is known for the sublinear, subcritical, and critical cases. Uniqueness has been proved for the sublinear case, and for the subcritical case when $\beta$ is small. However, it remains open for arbitrary $\beta>0$.
The main purpose of this paper is to address the uniqueness of positive solutions for any $\beta>0$.

We now state our main results.

\begin{theorem}\label{theorem1.1}
Suppose that $\beta>0$ and $\Omega$ is a bounded smooth domain. Then there exists $p_0>1$ such that equation \eqref{eq0.1} has exactly one positive solution if $1<p<p_0$. Moreover, the unique positive solution is nondegenerate, in the sense that the linearized problem 
\begin{equation}\label{1.6}
\begin{cases}
-\Delta h-pu^{p-1}h=0, \quad &\text{in}\ \Omega,\\
\frac{\partial h}{\partial \nu}+\beta h=0 &\text{on}\ \partial\Omega,
\end{cases}
\end{equation}
has only the trivial solution $h\equiv 0$ in $H^1(\Omega)$.
\end{theorem}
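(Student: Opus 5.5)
We argue by contradiction and use a scaling argument as $p\to1^+$, in the spirit of \cite{GS1}, with $\beta>0$ fixed. Suppose there are $p_n\to1^+$ and, for each $n$, either two distinct positive solutions of \eqref{eq0.1} with $p=p_n$, or a degenerate positive solution with a nontrivial solution $h_n$ of \eqref{1.6}.

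\textbf{Step 1: normalization.} Given a positive solution $u_n$, set $v_n=u_n/\|u_n\|_{L^\infty(\Omega)}$ and $\mu_n=\|u_n\|_{L^\infty}^{p_n-1}$. Then $-\Delta v_n=\mu_n v_n^{p_n}$ in $\Omega$, $\partial_\nu v_n+\beta v_n=0$ on $\partial\Omega$, and $\|v_n\|_\infty=1$. Testing against the first Robin eigenfunction $\phi_{1,\beta}>0$ gives
$$
\lambda_{1,\beta}\int_\Omega v_n\phi_{1,\beta}=\mu_n\int_\Omega v_n^{p_n}\phi_{1,\beta}.
$$
Since $0<v_n\le1$ and $p_n>1$, we have $v_n^{p_n}\le v_n$, hence $\mu_n\ge\lambda_{1,\beta}$. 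For the upper bound we use elliptic estimates: if $\mu_n\to\infty$, then a blow-up at a maximum point (interior, or boundary with the Robin term vanishing after rescaling) would produce a bounded positive solution of $-\Delta w=w$ on $\R^N$ or on a half-space with Neumann condition. Such a solution cannot exist, because $-\Delta w=w$ with $w>0$ bounded is impossible; for instance, compare with the first Dirichlet eigenfunction on large balls. So $\mu_n$ is bounded, and by $C^{2,\alpha}$ estimates $v_n\to v$ with $-\Delta v=\mu v$, $\partial_\nu v+\beta v=0$, $v\ge0$ and $\|v\|_\infty=1$. Therefore $\mu=\lambda_{1,\beta}$ and $v=\phi_{1,\beta}$, normalized with sup equal to $1$.

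\textbf{Step 2: pinning the amplitude.} Next we determine $\|u_n\|_\infty$. From the identity above,
$$
\lambda_{1,\beta}\int v_n\phi=\mu_n\int v_n^{p_n}\phi,
$$
and we expand $v_n^{p_n}=v_n(1+(p_n-1)\log v_n+o(p_n-1))$, with $\mu_n=\lambda_{1,\beta}(1+c_n(p_n-1))$. Writing $v_n=\phi+w_n$ with $w_n=O(p_n-1)$ from the linearized problem (using the Fredholm alternative for $-\Delta-\lambda_{1,\beta}$), one gets
$$
c_n\to c_*=-\frac{\int\phi^2\log\phi}{\int\phi^2}.
$$
This determines the limiting amplitude $\|u_n\|_\infty=\mu_n^{1/(p_n-1)}\to\lambda_{1,\beta}\,\text{-dependent value } e^{c_*}\cdot(\ldots)$; more precisely, $u_n\to e^{c_*}\phi$ up to normalization. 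The point is that every solution branch has the same first-order profile $\tilde u_n:=u_n/\lambda_{1,\beta}^{1/(p_n-1)}\to\Phi$, where $\Phi$ is an explicit multiple of $\phi$.

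\textbf{Step 3: nondegeneracy, and uniqueness via nondegeneracy.} For two solutions $u_n^1\ne u_n^2$, set
$$
h_n=\frac{\tilde u^1_n-\tilde u^2_n}{\|\tilde u^1_n-\tilde u^2_n\|_\infty}.
$$
It solves $-\Delta h_n=\lambda_{1,\beta}\,p_n\,\xi_n^{p_n-1}h_n$ with the Robin condition, where $\xi_n$ lies between $\tilde u_n^1$ and $\tilde u_n^2$; the same equation holds in the degenerate case with $\xi_n=\tilde u_n$. Passing to the limit, $h_n\to h=a\phi$ with $a\ne0$, since $\|h\|_\infty=1$. To reach a contradiction we go to the next order. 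Write
$$
p_n\xi_n^{p_n-1}=1+(p_n-1)(1+\log\Phi)+o(p_n-1),
$$
test the equation for $h_n$ with $\phi$, divide by $p_n-1$, and let $n\to\infty$. This yields
$$
\int(1+\log\Phi)\phi^2=0.
$$
On the other hand, the amplitude identity from Step 2, which is $\int\log\Phi\,\phi^2=0$ by the choice of normalization, gives $\int(1+\log\Phi)\phi^2=\int\phi^2\ne0$, a contradiction.

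\textbf{Main obstacle.} The hard step is the uniform expansion with error $o(p_n-1)$ in $C^0$, including near where $\phi$ is small. The eigenfunction $\phi$ is positive up to the boundary for $\beta>0$, so $\log\phi$ is bounded; this is exactly where the Robin condition helps compared with Dirichlet. The bound on $\mu_n$ from Step 1 also needs care at the boundary.
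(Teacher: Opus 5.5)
Your proof is correct, but after the common first step it reaches the contradiction by a different mechanism than the paper. Your Step 1 is the content of Lemmas \ref{lemma2.1}, \ref{lemma2.4} and \ref{lemma2.5}.

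From there the paper stays at leading order and uses sign information only:
\begin{itemize}
\item for two solutions, $\int_\Omega u_nv_n(u_n^{p_n-1}-v_n^{p_n-1})\,dx=0$ forces $u_n-v_n$ to change sign;
\item for a kernel element $h_n$, the quadratic form of the linearized operator is negative at $u_n$ (it equals $(1-p_n)\int_\Omega u_n^{p_n+1}\,dx$), so $h_n$ is not a first eigenfunction and must change sign.
\end{itemize}
Yet in both cases the normalized function converges uniformly to a nonzero multiple of $\phi_{1,\beta}$, which is bounded away from zero on $\bar\Omega$.

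You instead expand to order $\varepsilon=p_n-1$. Testing with $\phi$ fixes the amplitude: $\tilde u_n\to\Phi=e^{c_*}\phi$ with $\int_\Omega\phi^2\log\Phi=0$. The same test applied to the difference (or kernel) equation gives $\int_\Omega(1+\log\Phi)\phi^2=0$. In other words, the derivative of the reduced map $t\mapsto t\int_\Omega\phi^2\log(t\phi)$ at its unique zero $t=e^{c_*}$ would have to vanish, whereas it equals $\int_\Omega\phi^2>0$. Your expansions are uniform precisely because $\min_{\bar\Omega}\phi>0$, the same Robin-specific fact the paper's sign argument relies on.

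Comparing the two routes:
\begin{itemize}
\item The paper's argument is shorter and needs no information about the amplitude.
\item Yours is a Lyapunov--Schmidt-type computation. It treats uniqueness and nondegeneracy in one stroke and avoids the two separate sign-change arguments.
\item Yours also yields the finer asymptotics $\lambda_{1,\beta}^{-1/(p-1)}u_p\to e^{c_*}\phi$ as a by-product.
\end{itemize}

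Two minor remarks. First, the bound $v_n-\phi=O(p_n-1)$ via the Fredholm alternative is unnecessary. The limit $c_n\to c_*$ follows directly from $\mu_n/\lambda_{1,\beta}=\int_\Omega v_n\phi\big/\int_\Omega v_n^{p_n}\phi$, the uniform convergence $v_n\to\phi$, and the uniform bound on $\log v_n$. Second, $\|u_n\|_\infty$ itself does not converge: it tends to $0$ or $\infty$ unless $\lambda_{1,\beta}=1$. Only $\lambda_{1,\beta}^{-1/(p_n-1)}\|u_n\|_\infty\to e^{c_*}$ converges, which is what you actually use.
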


As a direct consequence, we obtain the following corollary.
\begin{corollary}\label{corollary1.2}
Suppose that $\beta>0$ and $\Omega$ is a ball. Then the positive solution of equation \eqref{eq0.1} is unique and radial if $1<p<p_0$.
\end{corollary}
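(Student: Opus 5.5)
The corollary follows directly from Theorem~\ref{theorem1.1} together with the rotational invariance of the problem when $\Omega$ is a ball. Let $\Omega=B_R(x_0)$, fix $\beta>0$, and let $p_0>1$ be the exponent given by Theorem~\ref{theorem1.1} for this domain. For $1<p<p_0$, Theorem~\ref{theorem1.1} gives that \eqref{eq0.1} has exactly one positive solution $u_\beta$. This already settles uniqueness, so only radial symmetry remains.

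For radial symmetry, I would use the invariance of \eqref{eq0.1} under rotations about the center. Let $A\in O(N)$ and set $v(x):=u_\beta(x_0+A(x-x_0))$. Then $v$ is defined on $\Omega$, since the map $x\mapsto x_0+A(x-x_0)$ sends $B_R(x_0)$ onto itself. Moreover $v>0$, and $\Delta v(x)=(\Delta u_\beta)(x_0+A(x-x_0))$ because the Laplacian commutes with orthogonal transformations. Hence $-\Delta v=v^p$ in $\Omega$.

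On the boundary, the outward normal at $x$ is $\nu(x)=(x-x_0)/R$, and it is mapped by $A$ to $\nu(x_0+A(x-x_0))$. Therefore
\[
\nabla v(x)\cdot\nu(x)=\bigl(A^T\nabla u_\beta\bigr)\cdot\nu(x)=\nabla u_\beta(y)\cdot A\nu(x)=\frac{\partial u_\beta}{\partial\nu}(y),
\]
where $y=x_0+A(x-x_0)$. It follows that $\frac{\partial v}{\partial\nu}+\beta v=0$ on $\partial\Omega$. So $v$ is a positive solution of \eqref{eq0.1}, and by uniqueness $v\equiv u_\beta$.

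Since this holds for every $A\in O(N)$, $u_\beta$ is constant on each sphere centered at $x_0$, that is, $u_\beta$ is radial. There is no real obstacle here: all the difficulty lies in Theorem~\ref{theorem1.1}. The only point to check is that $p_0$ there is allowed to depend on $\Omega$, which is harmless because the domain is fixed.
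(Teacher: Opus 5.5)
Your proof is correct and follows the paper's route. The paper gives the corollary as a direct consequence of Theorem~\ref{theorem1.1}: uniqueness comes from that theorem, and radial symmetry follows because the problem on a ball is invariant under rotations about the center. Your computation of the Laplacian and the Robin boundary term under $x\mapsto x_0+A(x-x_0)$ makes that invariance step explicit.
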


Moreover, one may ask:\\
{\em If $\Omega$ is a ball, are all positive radial solutions unique for all subcritical exponents $p$?}\\
We provide an answer in the following theorem. 

\begin{theorem}\label{theorem1.3}
Suppose that $\beta>0$, $p$ is subcritical, and $\Omega$ is a ball. If $N=2$, or $N\ge3$ with $\beta\ge N-2$, then each positive radial solution of equation \eqref{eq0.1} is nondegenerate, in the sense that the linearized problem \eqref{1.6} has only the trivial solution $h\equiv0$ in $H^1_{rad}(\Omega):=\{u\in H^1(\Omega) : u\ \text{is radial}\,\}$. Moreover, the positive radial solution of equation \eqref{eq0.1} is unique.
\end{theorem}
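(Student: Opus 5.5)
We reduce everything to a scalar shooting function built from the Lane--Emden solution on $\R^N$, and then show by a direct ODE computation that this function can cross the level determined by $\beta$ only transversally. We take $\Omega=B_1$, the unit ball. For a ball of radius $R$ the same argument works with $\beta$ replaced by $\beta R$ throughout.

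\emph{Reduction.} A positive radial solution satisfies
\[
u''+\tfrac{N-1}{r}u'+u^p=0,\qquad u'(0)=0,\qquad u(0)=a>0.
\]
Let $w$ solve $w''+\frac{N-1}{\rho}w'+w^p=0$ with $w(0)=1$, $w'(0)=0$. Since $p$ is subcritical, $w$ has a first zero $\rho_0<\infty$, and $w'<0$ on $(0,\rho_0]$. By uniqueness for the ODE,
\[
u(r)=a\,w\big(a^{\frac{p-1}{2}}r\big).
\]
Positivity on $B_1$ requires $\rho:=a^{\frac{p-1}{2}}<\rho_0$; the value $\rho=\rho_0$ is excluded because the Robin condition would then force $u'(1)=0$, whereas $w'(\rho_0)<0$. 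The Robin condition $u'(1)+\beta u(1)=0$ becomes $a^{\frac{p+1}{2}}w'(\rho)+\beta a\,w(\rho)=0$, that is,
\[
g(\rho):=-\frac{\rho\,w'(\rho)}{w(\rho)}=\beta,\qquad \rho\in(0,\rho_0).
\]
Thus positive radial solutions correspond bijectively, via $a=\rho^{2/(p-1)}$, to solutions of $g(\rho)=\beta$. Moreover $g$ is continuous, $g(0^+)=0$, and $g\to+\infty$ as $\rho\to\rho_0^-$. Hence at least one solution exists, and uniqueness amounts to $g$ attaining the value $\beta$ only once.

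\emph{Key identity.} From the equation, $w'+\rho w''=-(N-2)w'-\rho w^p$. Differentiating $g$ and substituting gives
\[
g'(\rho)=\frac{g(\rho)\big(g(\rho)-(N-2)\big)}{\rho}+\rho\,w(\rho)^{p-1}.
\]
At any $\rho$ with $g(\rho)=\beta\ge N-2$ (automatic when $N=2$, since $\beta>0$) the first term is $\ge0$ and the second is $>0$, so $g'(\rho)>0$. A continuous function can cross a level only upward at every crossing point only if it crosses that level at most once: if $\rho_1<\rho_2$ were consecutive solutions, then $g>\beta$ just to the right of $\rho_1$ and $g<\beta$ just to the left of $\rho_2$, contradicting the intermediate value theorem. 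This gives uniqueness of the positive radial solution.

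\emph{Nondegeneracy.} Let $h\in H^1_{rad}$ solve \eqref{1.6}. By regularity $h$ is a smooth radial solution of the linearized ODE with $h'(0)=0$. The function $z=\partial_a u_a$ solves the same ODE with $z(0)=1$ and $z'(0)=0$, so $h=c\,z$. If $c\neq0$, then $z$ satisfies the Robin condition, i.e.
\[
\partial_a\big[u_a'(1)+\beta u_a(1)\big]=0.
\]
Writing
\[
u_a'(1)+\beta u_a(1)=a\,w(\rho)\,\big(\beta-g(\rho)\big)
\]
and differentiating at a solution, where $\beta-g=0$, leaves only $-a\,w(\rho)\,g'(\rho)\,\frac{d\rho}{da}$. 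This is nonzero, because $w(\rho)>0$, $\frac{d\rho}{da}>0$ and $g'(\rho)>0$. Hence $c=0$ and $h\equiv0$.

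\emph{Main obstacle.} The delicate step is the computation of $g'$ and recognizing that the sign condition is exactly $g\ge N-2$ at the crossing. The remaining points, existence of the finite zero $\rho_0$ and the limits of $g$ at the endpoints, are standard for subcritical Lane--Emden solutions.
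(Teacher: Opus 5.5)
Your argument is correct. For nondegeneracy it is the same mechanism as the paper's; for uniqueness it takes a genuinely different route.

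\textbf{Nondegeneracy.} The paper tests \eqref{1.6} against $x\cdot\nabla u$ and uses $\int_\Omega u^p h\,dx=0$ to force $u(1)^{p-1}=\beta(N-2-\beta)$. This is your argument in PDE language. Indeed $\partial_a u_a=\frac{p-1}{2a}\bigl(\frac{2}{p-1}u+ru'\bigr)$, and at a solution your identity reads $\rho\,g'(\rho)=u(1)^{p-1}-\beta(N-2-\beta)$. So $g'(\rho)\neq0$ is exactly the paper's nondegeneracy condition.

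\textbf{Uniqueness.} The paper does not use the scaling reduction. It argues by continuation in $p$:
\begin{itemize}
\item it starts from uniqueness for $p$ near $1$ (Corollary~\ref{corollary1.2}, which rests on the blow-up analysis of Lemmas~\ref{lemma2.4}--\ref{lemma2.5});
\item it propagates uniqueness with the implicit function theorem, using nondegeneracy;
\item it rules out loss of uniqueness with a priori bounds from a blow-up/Liouville argument.
\end{itemize}
Your reduction to the scalar equation $g(\rho)=\beta$ for a single Lane--Emden profile gives existence, uniqueness and nondegeneracy at once for each fixed $p$. It needs no compactness, no implicit function theorem and no $p\to1$ analysis. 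The paper's scheme is more portable, since it needs only nondegeneracy along a path plus a priori bounds. On a ball with a pure power, though, your ODE argument is shorter and more transparent.

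\textbf{Scaling remark.} Your closing remark on $B_R$ is correct and worth keeping. Both proofs actually use the condition $\beta R\ge N-2$, which the paper's ``without loss of generality $\Omega=B_1$'' glosses over.
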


Next, assuming that $\Omega$ is bounded and convex, we consider the minimization  problem \eqref{1.4}. We obtain the following uniqueness result.
\begin{theorem}\label{Th1-4}
 Suppose that $p>1$ and $\Omega$ is bounded and convex in $\R^2$. Then there exists $\beta^*>0$ such that the minimizer of the problem \eqref{1.4} is unique for all $\beta>\beta^*$.
\end{theorem}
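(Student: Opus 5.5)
We argue by contradiction and blow-up analysis as $\beta\to\infty$, reducing uniqueness of minimizers of \eqref{1.4} to the nondegeneracy of the Dirichlet problem on the convex planar domain $\Omega$. The key external input is the theorem of Lin (and its refinements by Dancer and Damascelli--Grossi--Pacella): for $\Omega\subset\R^2$ bounded and convex and any $p>1$, the least-energy solution of
\[
-\Delta u=u^p\ \text{in }\Omega,\qquad u=0\ \text{on }\partial\Omega,
\]
is unique and nondegenerate. Equivalently, the minimizer of the Dirichlet Rayleigh quotient $I_\infty$ (the analogue of \eqref{1.4} on $H^1_0(\Omega)$), normalized by $\|u\|_{L^{p+1}}=1$, is unique up to sign. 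Its linearized operator $-\Delta-pu^{p-1}$ with Dirichlet data has trivial kernel.

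First I establish the convergence $I_\beta\to I_\infty$ as $\beta\to\infty$.
\begin{itemize}
\item Monotonicity gives $I_\beta\le I_\infty$.
\item Take normalized nonnegative minimizers $w_\beta$ with $\|w_\beta\|_{L^{p+1}}=1$. They are bounded in $H^1(\Omega)$ and satisfy $\beta\int_{\partial\Omega}w_\beta^2\le I_\infty$.
\item Along a subsequence $w_\beta\rightharpoonup w$ in $H^1(\Omega)$, strongly in $L^{p+1}(\Omega)$ and in $L^2(\partial\Omega)$, with trace $w=0$.
\item Lower semicontinuity then gives that $w$ is a Dirichlet minimizer and that $I_\beta\to I_\infty$.
\end{itemize}
By the uniqueness of the Dirichlet minimizer, the whole family converges: $w_\beta\to w_\infty$ strongly in $H^1(\Omega)$. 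The rescaled solutions $u_\beta=I_\beta^{1/(p-1)}w_\beta$ therefore converge to the unique Dirichlet least-energy solution $u_\infty$. Elliptic estimates upgrade this to uniform $L^\infty$ bounds and convergence in $C^{0}(\overline\Omega)$; this uses the Robin condition with $\beta$ large, for instance via the standard $W^{2,q}$ estimates for Robin problems, whose constants are uniform in $\beta\ge1$.

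Next, suppose that for a sequence $\beta_n\to\infty$ there are two distinct minimizers, which after rescaling give two distinct positive solutions $u_n\neq v_n$ of \eqref{eq0.1}. Both converge to $u_\infty$. Set
\[
h_n=\frac{u_n-v_n}{\|u_n-v_n\|_{L^\infty(\Omega)}},\qquad c_n=p\int_0^1\bigl(tu_n+(1-t)v_n\bigr)^{p-1}dt .
\]
Then $-\Delta h_n=c_nh_n$ in $\Omega$ with $\partial_\nu h_n+\beta_nh_n=0$ on $\partial\Omega$, and $c_n\to pu_\infty^{p-1}$ uniformly. Testing the equation with $h_n$ gives
\[
\int_\Omega|\nabla h_n|^2+\beta_n\int_{\partial\Omega}h_n^2=\int_\Omega c_nh_n^2\le C,
\]
so $h_n\rightharpoonup h$ in $H^1(\Omega)$ with $h\in H^1_0(\Omega)$. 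Passing to the limit in the weak formulation, restricted to test functions in $H^1_0(\Omega)$, shows that $h$ solves the Dirichlet linearized problem. Nondegeneracy then forces $h=0$.

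The main obstacle is ruling out this vanishing, that is, showing $h\ne0$ despite $\|h_n\|_\infty=1$. Weak $H^1$ convergence alone does not prevent the mass of $h_n$ from concentrating near $\partial\Omega$. My first attempt is to obtain uniform $C^{0,\alpha}(\overline\Omega)$ bounds for $h_n$ independent of $\beta_n$, using the Robin estimates with a bounded right-hand side $c_nh_n$, together with a barrier argument near the boundary. A maximum point $x_n$ of $|h_n|$ cannot lie on $\partial\Omega$: there $\partial_\nu h_n=-\beta_nh_n$, and the sign of this derivative contradicts maximality. Comparing with $\beta$-independent barriers, such as a suitable multiple of the torsion function plus a constant, should show that $|h_n|$ is small within distance $O(1/\beta_n)$ of the boundary while $\|h_n\|_\infty=1$. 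Equicontinuity then yields uniform convergence $h_n\to h$ with $\|h\|_\infty=1$, contradicting $h=0$.

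If uniform Hölder bounds prove delicate, the fallback is an $L^2$ normalization $\|h_n\|_{L^2(\Omega)}=1$ with Rellich compactness, which gives $\|h\|_{L^2}=1$ directly and the same contradiction. Hence for $\beta$ large there is a single positive minimizer. Since any minimizer $u$ has $|u|$ also a minimizer, which is positive by the strong maximum principle, this gives uniqueness up to sign and scaling. So there exists $\beta^*>0$ such that the minimizer of \eqref{1.4} is unique for all $\beta>\beta^*$.
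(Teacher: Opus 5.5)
Your proposal is correct and follows the paper's strategy. Along $\beta_n\to\infty$ both minimizers converge to a Dirichlet minimizer, which by Lin's theorem is unique and nondegenerate, and the normalized difference of the two minimizers yields a nontrivial element of the Dirichlet linearized kernel. Your $L^2$-normalized fallback is essentially the paper's non-vanishing argument: the paper normalizes $u_n-v_n$ in $H^1(\Omega)$ and combines the energy identity with the uniform coercivity $\int_\Omega|\nabla\tilde w_n|^2+\beta_n\int_{\partial\Omega}\tilde w_n^2\ge\min\{\frac12,\frac{\lambda_{1,\beta_n}}{2}\}$, using only strong $L^{p+1}$ convergence of the minimizers. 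Hence neither the $L^\infty$-normalized barrier/H\"older route nor the uniform-in-$\beta$ $W^{2,q}$ Robin estimates you invoke are actually needed.
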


Finaly, as an application of Theorem \ref{theorem1.1}, we study the perturbed problem 
\begin{equation}\label{eq1.7}
\begin{cases}
-\Delta u=u^p+\gamma u^q,\quad &\text{in}\ \Omega,\\
u>0,  \quad &\text{in}\ \Omega,\\
\frac{\partial u}{\partial \nu}+\beta u=0, &\text{on}\ \partial\Omega,
\end{cases}
\end{equation}
where $\beta,\gamma>0$, $0<q<1$, and $p$ is subcritical. Then we have the following conclusion.

\begin{theorem}\label{theorem1.4}
Suppose that $\beta>0$ and $1<p<p_0$. Then there exists $\gamma_*>0$ such that 
equation \eqref{eq1.7} has exactly two positive solutions for $0<\gamma<\gamma_*$.

One of these solutions, denoted by $\bar u_\gamma$, is the minimal one and satisfies $\bar u_\gamma\to0$ as $\gamma\to0$. The other solution, denoted by $u_\gamma$, satisfies $u_\gamma\to u_0$, where $u_0>0$ is a solution of \eqref{m5}.
\end{theorem}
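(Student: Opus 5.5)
We prove Theorem \ref{theorem1.4} by a perturbation argument around $\gamma=0$. There are three parts: a branch of small solutions, a branch of large solutions bifurcating from the nondegenerate solution given by Theorem \ref{theorem1.1}, and a compactness argument showing these are the only solutions for small $\gamma$. Here \eqref{m5} is understood to be \eqref{eq0.1}, i.e. the limiting problem at $\gamma=0$.

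\emph{Step 1 (minimal solution).} Sub- and supersolutions give a minimal solution $\bar u_\gamma$ for small $\gamma$. Take $\underline u=\varepsilon\varphi_{1,\beta}$, the scaled first Robin eigenfunction. Since $q<1$,
\[
\lambda_{1,\beta}\varepsilon\varphi_{1,\beta}\le\gamma\varepsilon^q\varphi_{1,\beta}^q
\]
for $\varepsilon$ small, so $\underline u$ is a subsolution. For the supersolution take $\bar w=M w$, where $w$ solves \eqref{1.2}; this requires $M\ge M^p\|w\|_\infty^p+\gamma M^q\|w\|_\infty^q$, which holds for a suitable small $M$ once $\gamma$ is small. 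Monotone iteration then yields a minimal solution $\bar u_\gamma\le Mw$, and letting $M\to0$ as $\gamma\to0$ shows $\bar u_\gamma\to0$.

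To see that this is the only small solution, rescale: set $v=\gamma^{-1/(1-q)}u$. Then
\[
-\Delta v=\gamma^{(p-1)/(1-q)}v^p+v^q .
\]
Any solution with $\|u\|_\infty\to0$ must, after this scaling, stay bounded; this is checked by testing against $\varphi_{1,\beta}$ and using a blow-up argument. Such solutions converge to the unique solution of the sublinear problem $-\Delta v=v^q$ with Robin condition, whose uniqueness is known from \cite{CGL}. Uniqueness near this limit should follow from the strict positivity of the linearization of the sublinear problem in the positive cone, i.e.\ the standard Brezis--Oswald type argument.

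\emph{Step 2 (large solution).} Let $u_0$ be the unique solution of \eqref{eq0.1}, which is nondegenerate by Theorem \ref{theorem1.1}. Apply the implicit function theorem to
\[
F(\gamma,u)=u-(-\Delta_\beta)^{-1}\bigl(u^p+\gamma u^q\bigr)
\]
in $C(\bar\Omega)$ near $u_0$, where $-\Delta_\beta$ is the Laplacian with the Robin condition. Since $u_0$ has a positive lower bound on $\bar\Omega$ (by the Hopf lemma, as $\beta<\infty$), the map $u\mapsto u^q$ is $C^1$ near $u_0$, so $F$ is $C^1$ there. This gives a unique branch $u_\gamma\to u_0$.

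\emph{Step 3 (no other solutions); this is the main obstacle.} Take any sequence of solutions $u_n$ with $\gamma_n\to0$.
\begin{itemize}
\item \emph{A priori bounds.} Blow-up analysis with subcritical $p$ gives uniform $L^\infty$ bounds, using Liouville theorems on $\R^N$ and on the half-space with the Neumann-limit condition, since $\beta$ is fixed and the boundary term rescales away.
\item \emph{Compactness.} Up to a subsequence, $u_n\to u$ solving \eqref{eq0.1} with $u\ge0$.
\item \emph{Dichotomy.} If $u\not\equiv0$, then $u=u_0$ by Theorem \ref{theorem1.1}, and the local uniqueness of Step 2 gives $u_n=u_{\gamma_n}$. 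If $u\equiv0$, then $u_n$ is small and Step 1 gives $u_n=\bar u_{\gamma_n}$.
\end{itemize}

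The delicate point is the case $u\equiv0$: we must show that small solutions are exactly in the regime of the rescaling of Step 1, and exclude an intermediate scale between $\gamma^{1/(1-q)}$ and $O(1)$. To handle it, I would test the equation with $\varphi_{1,\beta}$:
\[
\lambda_{1,\beta}\int u\varphi_{1,\beta}=\int u^p\varphi_{1,\beta}+\gamma\int u^q\varphi_{1,\beta}.
\]
Combined with a Harnack-type comparability $u\sim\|u\|_\infty\varphi_{1,\beta}$, valid because $u$ is small, this forces $\|u\|_\infty^{1-q}\lesssim\gamma$. Hence $\|u\|_\infty=O(\gamma^{1/(1-q)})$, which puts $u$ in the scaling regime of Step 1.

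The upshot is exactly two positive solutions for $0<\gamma<\gamma_*$.
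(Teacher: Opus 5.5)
Your proposal is correct, but it handles the two branches differently from the paper.

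\textbf{Large solutions.} You apply the implicit function theorem at $(0,u_0)$ in $C(\bar\Omega)$. This gives existence of $u_\gamma\to u_0$ and uniqueness near $u_0$ at once. The paper instead takes existence of the second solution from the mountain pass theorem. It proves uniqueness near $u_0$ by contradiction: $(u_n-v_n)/\|u_n-v_n\|_\infty$ would converge to a nontrivial kernel element of the linearization at $u_0$. That is exactly the uniqueness half of your IFT, so your version is the more self-contained one.

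\textbf{Small limit $u\equiv0$.} This is the real difference. You rescale by $\gamma^{1/(1-q)}$, need the two-sided bound $\|u\|_\infty\sim\gamma^{1/(1-q)}$, pass to the limit $U$ (the solution of \eqref{l4.1}), and use uniqueness and nondegeneracy of $U$. The paper's Lemma~\ref{lemma4.3} is a direct, non-asymptotic estimate instead. It writes a non-minimal solution as $u=\bar u_\gamma+v$ with $v>0$, and uses three facts: $\bar u_\gamma\ge v_\gamma=\gamma^{1/(1-q)}U$ (Lemma~\ref{lemma4.1}(2)), concavity of $s^q$, and the identity $\gamma q v_\gamma^{q-1}=qU^{q-1}$. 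This yields $-\Delta v-qU^{q-1}v\le p\|u\|_\infty^{p-1}v$. Hence the Rayleigh quotient of $v$ would fall below $\tilde\lambda_{1,\beta}$ unless $\|u\|_\infty\ge(\tilde\lambda_{1,\beta}/p)^{1/(p-1)}$.

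That threshold is independent of $\gamma$ and valid on all of $(0,\gamma_0)$. So the small-limit case is immediate, there is no intermediate scale to exclude, and uniqueness of the sublinear problem is not needed there. Your route is longer but gives more, namely the profile $\bar u_\gamma=\gamma^{1/(1-q)}(U+o(1))$.

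\textbf{Repair needed: the comparability step.} The claim $u\sim\|u\|_\infty\varphi_{1,\beta}$ does not follow from smallness of $u$. Smallness makes $u^{p-1}$ small but allows $\gamma u^{q-1}$ to be large. Argue instead with $\tilde u=u/\|u\|_\infty$, which solves $-\Delta\tilde u=\|u\|_\infty^{p-1}\tilde u^p+\mu\tilde u^q$ with $\mu=\gamma\|u\|_\infty^{q-1}$ and the Robin condition.
\begin{itemize}
\item Testing with $\varphi_{1,\beta}$ as in Lemma~\ref{lemma2.1} gives $\mu\le\lambda_{1,\beta}$.
\item $\mu\to0$ along a sequence is impossible: the $C^1$ limit would be a nonzero function with $-\Delta\tilde u=0$ and $\partial_\nu\tilde u+\beta\tilde u=0$.
\end{itemize}
So $\mu$ stays in a compact subset of $(0,\infty)$, which is your two-sided bound. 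The comparability then follows a posteriori from the positive limit. Alternatively, $u\ge v_\gamma$ gives the bounded potential $\gamma u^{q-1}\le U^{q-1}$, and Harnack applies.

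\textbf{Two smaller points.}
\begin{itemize}
\item The nondegeneracy of $U$ you need is just $\tilde\lambda_{1,\beta}>0$. Test the first eigenfunction $\phi$ of $-\Delta-qU^{q-1}$ against $U$: this gives $\tilde\lambda_{1,\beta}\int_\Omega\phi U=(1-q)\int_\Omega U^q\phi>0$. This is what drives the IFT at the limit point. A Brezis--Oswald monotonicity argument does not apply, since $\gamma^{(p-1)/(1-q)}s^{p-1}+s^{q-1}$ is not monotone in $s$.
\item To see that $\bar u_\gamma$ is the minimal solution, start the iteration from $v_\gamma$, which lies below every solution, not from $\varepsilon\varphi_{1,\beta}$.
\end{itemize}
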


The paper is organized as follows. In Section 2, we collect some preliminary results and prove several auxiliary lemmas. In Section 3, we prove our uniqueness results. Finally, we present an application of our uniqueness results in Section 4.

\section{Preliminaries and asymptotic behavior as $p \to 1$}

In this section, we collect some preliminary results and establish several auxiliary lemmas. Let $u_p$ be a positive solution of \eqref{eq0.1}. Then we have the following result. 

\begin{lemma}\label{lemma2.1}
Suppose that $u_p$ is a solution of equation \eqref{eq0.1}, then we have 
$$
\|u_p\|_\infty^{p-1}\ge \lambda_{1,\beta}.
$$
\end{lemma}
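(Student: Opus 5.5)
The plan is to test the equation against the first Robin eigenfunction and compare. Let $\phi_1>0$ denote the first eigenfunction of \eqref{1.3}, corresponding to $\lambda_{1,\beta}$. Positivity of $\phi_1$ in $\Omega$ is standard, and by Hopf's lemma together with the Robin condition it is also positive up to the boundary.

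First, use $\phi_1$ as the test function in the weak formulation of \eqref{eq0.1} for $u_p$. Symmetrically, use $u_p$ as the test function in the weak formulation of \eqref{1.3}. The bilinear form
\[
\int_\Omega \nabla u\nabla v\,dx+\beta\int_{\partial\Omega}uv\,dS
\]
is symmetric, so the two left-hand sides coincide. This gives
\[
\int_\Omega u_p^p\phi_1\,dx=\lambda_{1,\beta}\int_\Omega u_p\phi_1\,dx.
\]
Equivalently, one can apply Green's second identity: the boundary terms cancel because both functions satisfy $\partial_\nu w=-\beta w$ on $\partial\Omega$.

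Next, bound $u_p^p=u_p^{p-1}u_p\le \|u_p\|_\infty^{p-1}u_p$ pointwise. Substituting this into the identity gives
\[
\lambda_{1,\beta}\int_\Omega u_p\phi_1\,dx\le \|u_p\|_\infty^{p-1}\int_\Omega u_p\phi_1\,dx .
\]
Since $u_p>0$ and $\phi_1>0$ in $\Omega$, the integral $\int_\Omega u_p\phi_1\,dx$ is strictly positive. Dividing by it yields $\|u_p\|_\infty^{p-1}\ge\lambda_{1,\beta}$.

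There is no real obstacle here. The only points to check are that $u_p\in L^\infty$, which holds because $u_p$ is a classical solution by elliptic regularity as noted in the introduction, and that the symmetric use of the weak formulations is legitimate. The latter is fine because $u_p,\phi_1\in H^1(\Omega)$.
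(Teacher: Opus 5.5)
Your proof is correct and follows essentially the same route as the paper: both test \eqref{eq0.1} against the first Robin eigenfunction $\phi_{1,\beta}$ and use the symmetry of the Robin bilinear form to get $\int_\Omega u_p\phi_{1,\beta}\,(u_p^{p-1}-\lambda_{1,\beta})\,dx=0$. The only difference is cosmetic: the paper concludes $\max_\Omega(u_p^{p-1}-\lambda_{1,\beta})\ge 0$ from the positive weight, while you use the pointwise bound $u_p^p\le\|u_p\|_\infty^{p-1}u_p$ and divide by $\int_\Omega u_p\phi_{1,\beta}\,dx>0$.
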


\begin{proof}
Let $\phi_{1,\beta}>0$ be the first eigenfunction of the Robin eigenvalue problem 
\begin{equation}\label{eq1.3}
\begin{cases}
-\Delta \phi=\lambda_{1,\beta}\phi,\quad &\text{in}\ \Omega,\\
\frac{\partial \phi}{\partial \nu}+\beta \phi=0, &\text{on}\ \partial\Omega.
\end{cases}
\end{equation}
Multiplying equation \eqref{eq1.3} by $u_p$ and integrating by parts, we get
\begin{equation}\label{eq1.4}
\int_{\Omega}\nabla \phi_{1,\beta}\nabla u_p dx+\beta\int_{\partial\Omega}\phi_{1,\beta}u_p dS=\lambda_{1,\beta}\int_{\Omega}\phi_{1,\beta}u_p dx.
\end{equation}
On the other hand, multiplying equation \eqref{eq0.1} by $\phi_{1,\beta}$ and integrating by parts, we have 
\begin{equation}\label{eq1.5}
\int_{\Omega}\nabla u_p \nabla \phi_{1,\beta} dx+\beta\int_{\partial\Omega}u_p \phi_{1,\beta}dS=\int_{\Omega}u_p^p\phi_{1,\beta} dx.
\end{equation}
It follows from \eqref{eq1.4} and \eqref{eq1.5} that 
\begin{equation*}
\int_{\Omega}u_p\phi_{1,\beta}(u_p^{p-1}-\lambda_{1,\beta})dx=0,
\end{equation*}
which implies that 
$$
\max_{\Omega} (u_p^{p-1}-\lambda_{1,\beta})\ge 0.
$$
Thus, we can derive 
$$
\|u_p\|^{p-1}_\infty\ge \lambda_{1,\beta}.
$$
\end{proof}

In the following, we show that 
$$
\|u_p\|^{p-1}_\infty\to \lambda_{1,\beta},\quad \text{as}\ p\to1.
$$
To this end, we first prove that $\|u_p\|^{p-1}_\infty$ remains bounded as $p\to 1$.

\begin{lemma}\label{lemma2.4}
Suppose that $u_p$ is a solution of equation \eqref{eq0.1}, then there exists a constant $C_0$, independent of $p$, such that $\|u_p\|^{p-1}_\infty\le C_0$ as $p\to 1$.

\end{lemma}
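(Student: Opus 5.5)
Let $u_p$ be a positive solution of \eqref{eq0.1} and set $M_p:=\|u_p\|_\infty$ and $\mu_p:=M_p^{p-1}$. We argue by contradiction with a blow-up (Gidas--Spruck type) rescaling. Suppose there is a sequence $p_n\to1$ with $\mu_n:=M_{p_n}^{p_n-1}\to\infty$. Pick $x_n\in\overline\Omega$ with $u_{p_n}(x_n)=M_n$ and define
\[
v_n(y):=\frac{1}{M_n}\,u_{p_n}\Bigl(x_n+\frac{y}{\sqrt{\mu_n}}\Bigr),\qquad y\in\Omega_n:=\sqrt{\mu_n}(\Omega-x_n).
\]
Then $0<v_n\le1$, $v_n(0)=1$, and $-\Delta v_n=v_n^{p_n}$ in $\Omega_n$. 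The boundary condition rescales to $\partial_\nu v_n+\frac{\beta}{\sqrt{\mu_n}}v_n=0$ on $\partial\Omega_n$, so the Robin coefficient tends to $0$. Since the right-hand side is bounded by $1$ and the boundary data are uniformly controlled, standard $W^{2,q}$ and then $C^{1,\alpha}$ estimates up to the (flattening, smooth) boundary give local compactness.

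Next we identify the limit. Up to a subsequence, either $\sqrt{\mu_n}\,\mathrm{dist}(x_n,\partial\Omega)\to\infty$, and $v_n\to v$ in $C^1_{loc}(\mathbb R^N)$; or this distance stays bounded, and $v_n\to v$ on a half-space $H$ with the Neumann condition $\partial_\nu v=0$. In both cases $v$ satisfies
\[
-\Delta v=v,\qquad 0\le v\le1,\qquad v(0)=1,
\]
with $v\ge0$. In the half-space case we reflect evenly across $\partial H$ to obtain a bounded nonnegative solution of $-\Delta v=v$ on all of $\mathbb R^N$. The strong maximum principle gives $v>0$.

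The contradiction comes from the fact that there is no positive bounded (indeed no positive) solution of $-\Delta v=v$ on $\mathbb R^N$. Indeed, take a ball $B_R$ with $R$ so large that the first Dirichlet eigenvalue satisfies $\lambda_1(B_R)<1$, and let $\varphi>0$ be its eigenfunction. Multiplying by $\varphi$ and integrating by parts gives
\[
(1-\lambda_1(B_R))\int_{B_R}v\varphi=\int_{\partial B_R}v\,\partial_\nu\varphi\le0 .
\]
The left side is positive, so this is a contradiction. Hence $\mu_p$ is bounded as $p\to1$.

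The main obstacle is the uniform boundary regularity in the second case. The rescaled domains $\Omega_n$ flatten, and we need $C^{1,\alpha}$ bounds for the oblique/Robin problem with a vanishing coefficient, uniformly in $n$. I would handle this by flattening the boundary locally with the (uniformly $C^2$) rescaled charts and applying the standard $L^q$ estimates for the Neumann problem, treating $\frac{\beta}{\sqrt{\mu_n}}v_n$ as a bounded boundary datum. A second point to check is that the maximum point stays at bounded distance in the limit, so that $v(0)=1$ survives. This holds because $y=0$ is fixed in the rescaled coordinates and the convergence is $C^0_{loc}$ up to the boundary.
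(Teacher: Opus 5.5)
Your proposal is correct and is essentially the paper's proof. It uses the same blow-up rescaling by $\|u_n\|_\infty^{(p_n-1)/2}$, the same dichotomy on the rescaled distance of the maximum point to $\partial\Omega$ (the vanishing Robin coefficient yields a Neumann half-space limit, followed by even reflection), and the same Dirichlet-eigenfunction contradiction for a positive solution of $-\Delta v=v$ on $\mathbb{R}^N$. The only difference is that you keep the case where the rescaled distance tends to $0$, getting $v(0)=1$ at a boundary point of the half-space from convergence up to the boundary, whereas the paper appeals to Gidas--Spruck to exclude that case; your handling is legitimate and avoids that extra step.
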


\begin{proof}
Suppose that $\{p_n\}$ is a sequence such that $p_n>1$ and $p_n\to 1$ as $n\to\infty$. Let $u_n$ be a solution of \eqref{eq0.1} for $p=p_n$. We argue by contradiction. Suppose that 
$$
\|u_n\|^{p_n-1}_\infty\to \infty,\quad \text{as}\ n\to\infty.
$$
Define  
$$
\hat{u}_{n}(x)=\frac{1}{\|u_{n}\|_\infty}u_n\Big(\frac{x}{\|u_{n}\|_\infty^{\frac{p_n-1}{2}}}+x_{n}\Big),
$$
where $x_n$ is given by
$$
u_n(x_n)=\max_{x\in\Omega}u_n(x)=\|u_n\|_\infty.
$$
Then $\hat{u}_n$ is a solution of 
\begin{equation}
\begin{cases}
-\Delta \hat{u}_n=\hat{u}_n^{p_n},\quad &\text{in}\ \Omega_n,\\
\|u_n\|_\infty^{\frac{p_n-1}{2}}\frac{\partial \hat{u}_n}{\partial \nu}+\beta \hat{u}_n=0, &\text{on}\ \partial\Omega_n,
\end{cases}
\end{equation}
where $\Omega_n=\|u_n\|^{\frac{p_n-1}{2}}(\Omega-x_n)$. Thus, we distinguish two cases:

Case (i): If $\|u_n\|_\infty^{\frac{p_n-1}{2}}dist(x_n,\partial\Omega)\to \infty$ as $n\to\infty$, then $\Omega_n\to \R^N$. Since $0<\hat{u}_n\leq 1$, by elliptic regularity, we obtain that 
$$
\hat{u}_n\to \hat{u}_\infty\quad \text{in}\  C^2_{loc}(\R^N),
$$ 
where $\hat{u}_\infty$ is a positive solution of 
\begin{equation}\label{2.5}
\begin{cases}
-\Delta \hat{u}_\infty=\hat{u}_\infty,\quad &\text{in}\ \R^N,\\
\hat{u}_\infty>0 &\text{in}\ \R^N.
\end{cases}
\end{equation}
Let $\lambda_1^D$ and $\phi^D$ be respectively the first eigenvalue and the corresponding eigenfunction of the Dirichlet problem on $B_r$. Namely, 
\begin{equation}
\begin{cases}
-\Delta \phi^D=\lambda_1^D\phi^D,\quad &\text{in}\ B_r\\
\phi^D=0, &\text{on}\ \partial B_r.
\end{cases}
\end{equation}
A direct computation yields that 
$$
0>\int_{\partial B_r}\frac{\partial \phi^D}{\partial \nu}\hat{u}_\infty dS=(1-\lambda_1^D)\int_{B_r}\phi^D \hat{u}_\infty dx>0,
$$ 
provided $r$ is sufficiently large. This is a contradiction.

Case (ii): If $\|u_n\|_\infty^{\frac{p_n-1}{2}}dist(x_n,\partial\Omega)\to C\in[0,+\infty)$ as $n\to\infty$, then we have $x_n\to x_\infty\in\partial\Omega$. Since $\partial\Omega$ is smooth, after translation and rotation, we can assume that $x_\infty=0$ and that there exist $R>0$ and a smooth function $\varphi$ such that locally $\Omega$ is of the form 
\begin{align*}
\Omega\cap B_R&=\{ (x^\prime,x_N)\in B_R : x_N>\varphi(x^\prime)\},\\
\partial\Omega\cap B_R&=\{(x^\prime,x_N)\in B_R : x_N=\varphi(x^\prime)\},\\
\varphi(0)&=0,\quad \nabla \varphi(0)=0,
\end{align*}
where $B_R=\{x\in\R^N : |x|\le R\}$. Define a map $\Phi=(\Phi_1,\cdots,\Phi_N): B_R\to \R^N$ by
\begin{align*}
\Phi_i(x)&=x_i-\frac{\varphi(x^\prime)-x_N}{1+|\nabla \varphi(x^\prime)|^2}\frac{\partial \varphi(x^\prime)}{\partial x_i},\quad 1\le i\le N-1,\\
\Phi_N(x)&=x_N-\varphi(x^\prime).
\end{align*}
Then the determinant of the Jacobian of $\Phi$ at $0$ is clearly equal to $1$. Thus, by the inverse function theorem, there exist a constant $R_0<R$ and an open neighborhood of zero $\tilde{B}\subset B_R$ such that 
\begin{align*}
&\Phi: \Omega\cap\tilde{B}\to B_{R_0}^+=\{y\in B_{R_0} : y_N>0\},\\
&\Phi: \partial\Omega\cap \tilde{B}\to \{y\in B_{R_0} : y_N=0\}.
\end{align*}
Let 
$$
v_n(y)=u_n(\Phi^{-1}(y)),
$$
then $v_n$ solves the following problem 
\begin{equation*}
\begin{cases}
\displaystyle -\sum a_{ij}(y)\frac{\partial^2 v_n}{\partial y_i\partial y_j}+\sum b_j(y) \frac{\partial v_n}{\partial y_j}=v_n^{p_n},\quad &\text{in}\ B_{R_0}^+,\\
\displaystyle \alpha(y)\frac{\partial v_n}{\partial y_N}+\beta v_n(y)=0, &\text{on}\ B_{R_0}\cap\{y_N=0\},\\
v_n(y_n)=\|u_n\|_\infty,
\end{cases}
\end{equation*}
where $y_n$ is given by
$$
(y_{n,1},\cdots,y_{n, N})=y_n=\Phi(x_n),
$$
and $a_{ij}(y)$, $b_j(y)$, and $\alpha(y)$ are smooth functions satisfying 
$$
a_{ij}(y)=\delta_{ij}+O(|y|),\quad b_j(y)=(\Delta\Phi_{j})(\Phi^{-1}y),\quad \alpha(y)=-\sqrt{1+|\nabla \varphi(\Phi^{-1}(y^\prime,0))|^2}.
$$
By the definition of $\Phi$, we have 
\begin{equation}\label{2.7}
\lim_{n\to\infty}\|u_n\|_\infty^{\frac{p_n-1}{2}}dist(x_n,\partial\Omega)=\lim_{n\to\infty}\|u_n\|_\infty^{\frac{p_n-1}{2}}y_{n, N}=C.
\end{equation}
Arguing as in \cite[p.~891]{GS1}, one easily shows that the case $C=0$ cannot occur. Hence, $C>0$.

Define 
$$
\hat{v}_n (y)=\frac{1}{\|u_n\|_\infty}v_n\Big(\frac{y}{\|u_n\|_\infty^{\frac{p_n-1}{2}}}+y_n\Big),\quad y\in B_n=\|u_n\|_\infty^{\frac{p_n-1}{2}}(B_{R_0}^+-y_n),
$$
then we have $\hat{v}_n$ solves the following equation 
\begin{equation*}
\begin{cases}
\displaystyle -\sum a_{ij}\Big(\frac{y}{\|u_n\|_\infty^{\frac{p_n-1}{2}}}+y_n\Big)\frac{\partial^2 \hat{v}_n}{\partial y_i\partial y_j}+\frac{1}{\|u_n\|_\infty^{\frac{p_n-1}{2}}}\sum b_j\Big(\frac{y}{\|u_n\|_\infty^{\frac{p_n-1}{2}}}+y_n\Big) \frac{\partial \hat{v}_n}{\partial y_j}=\hat{v}_n^{p_n},\ \text{in}\ B_{n},\\
\displaystyle \|u_n\|_\infty^{\frac{p_n-1}{2}}\alpha\Big(\frac{y}{\|u_n\|_\infty^{\frac{p_n-1}{2}}}+y_n\Big)\frac{\partial \hat{v}_n}{\partial y_N}+\beta \hat{v}_n(y)=0, \ \text{on}\ B_n\cap \{y_N=-\|u_n\|_\infty^{\frac{p_n-1}{2}}y_{n, N}\},\\
\hat{v}_n(0)=1.
\end{cases}
\end{equation*}
We observe that $0<\hat{v}_n\le 1$. By \eqref{2.7} and elliptic regularity, we deduce that  
$$
\hat{v}_n\to \hat{v}_\infty\quad\text{in}\ C_{loc}^2(B_\infty),
$$
where $\hat{v}_\infty$ is a solution of 
\begin{equation*}
\begin{cases}
-\Delta \hat{v}_\infty=\hat{v}_\infty,\quad &\text{in}\ B_\infty,\\
\frac{\partial \hat{v}_\infty}{\partial y_N}=0, &\text{on}\ y_N=-C,\\
\hat{v}_\infty(0)=1,
\end{cases}
\end{equation*}
where $B_\infty=\{y\in\R^N : y_N>-C\}$.  
By performing an even reflection of $\hat{v}_\infty$ with respect to the hyperplane $y_N=-C$, we obtain a positive solution of \eqref{2.5}. Proceeding as in Case (i), we arrive at a contradiction. 
\end{proof}

As a direct consequence of Lemma \ref{lemma2.1} and \ref{lemma2.4}, we can conclude:

\begin{lemma}\label{lemma2.5}
Suppose that $u_p$ is a solution of equation \eqref{eq0.1}, then we have that 
$$
\lim_{p\to1} \|u_p\|_\infty^{p-1} = \lambda_{1,\beta}.
$$
\end{lemma}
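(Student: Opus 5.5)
By Lemma \ref{lemma2.1} and Lemma \ref{lemma2.4}, $\lambda_{1,\beta}\le \|u_p\|_\infty^{p-1}\le C_0$ for all $p$ close to $1$. So it suffices to show that every sequence $p_n\to1$ has a subsequence along which $\|u_n\|_\infty^{p_n-1}\to\lambda_{1,\beta}$, where $u_n=u_{p_n}$. Passing to a subsequence, we may assume $\|u_n\|_\infty^{p_n-1}\to\mu\in[\lambda_{1,\beta},C_0]$. It then remains to prove $\mu\le\lambda_{1,\beta}$; we will show that $\mu$ must be a Robin eigenvalue with a positive eigenfunction.

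We normalize by setting $w_n=u_n/\|u_n\|_\infty$, so that $0<w_n\le1$, $\|w_n\|_\infty=1$, and
\[
-\Delta w_n=\|u_n\|_\infty^{p_n-1}w_n^{p_n}\ \text{in }\Omega,\qquad \frac{\partial w_n}{\partial\nu}+\beta w_n=0\ \text{on }\partial\Omega .
\]
The right-hand side is bounded in $L^\infty(\Omega)$ uniformly in $n$, since $\|u_n\|_\infty^{p_n-1}\le C_0$ and $0<w_n\le1$. Standard $W^{2,q}$ estimates for the Robin problem, valid on smooth bounded domains for every $q<\infty$, together with Sobolev embedding, give a uniform $C^{1,\alpha}(\overline\Omega)$ bound on $w_n$. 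Hence, up to a subsequence, $w_n\to w$ in $C^1(\overline\Omega)$, with $0\le w\le1$ and $\|w\|_\infty=1$, so $w\not\equiv0$.

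We next pass to the limit in the equation. Since $p_n\to1$ and $0\le w_n\le 1$, we have $w_n^{p_n}\to w$ uniformly; indeed $|t^{p}-t|\le (p-1)/e$ on $[0,1]$ for $p$ near $1$, which bounds $|w_n^{p_n}-w_n|$ uniformly. Passing to the limit in the weak formulation then shows that $w$ is a nonnegative, nontrivial weak (and hence classical) solution of
\[
-\Delta w=\mu w\ \text{in }\Omega,\qquad \frac{\partial w}{\partial\nu}+\beta w=0\ \text{on }\partial\Omega .
\]
By the strong maximum principle and Hopf's lemma (the Robin condition rules out a boundary zero), $w>0$ in $\overline\Omega$. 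To identify $\mu$, we repeat the argument of Lemma \ref{lemma2.1}: testing this equation against $\phi_{1,\beta}$ and the eigenvalue equation for $\phi_{1,\beta}$ against $w$, and subtracting, gives
\[
(\mu-\lambda_{1,\beta})\int_\Omega w\phi_{1,\beta}\,dx=0 .
\]
The integral is positive, so $\mu=\lambda_{1,\beta}$. Since the subsequence was arbitrary, the full limit $\lim_{p\to1}\|u_p\|_\infty^{p-1}=\lambda_{1,\beta}$ follows.

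The only step requiring care is the compactness argument: the uniform regularity estimate up to the boundary for the Robin problem, and the uniform convergence $w_n^{p_n}\to w$ near points where $w$ vanishes. The boundedness from Lemma \ref{lemma2.4} is exactly what makes the right-hand side uniformly bounded, so both points are routine.
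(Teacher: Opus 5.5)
Your proposal is correct and follows the paper's proof. Both use the two-sided bound from Lemmas \ref{lemma2.1} and \ref{lemma2.4}, the normalization $u_p/\|u_p\|_\infty$, compactness via elliptic regularity, and identification of the limit as a nonnegative, nontrivial Robin eigenfunction, which forces $\mu=\lambda_{1,\beta}$. You are in fact more careful than the paper, which asserts convergence of $\|u_p\|_\infty^{p-1}$ to some $\mu$ without extracting subsequences and only states that a positive eigenfunction gives $\mu=\lambda_{1,\beta}$; you make both the subsequence argument and the orthogonality computation against $\phi_{1,\beta}$ explicit.
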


\begin{proof}
By Lemma \ref{lemma2.1} and Lemma \ref{lemma2.4}, we have 
$$
\lambda_{1,\beta}\le \|u_p\|_\infty^{p-1}\le C_0.
$$
Thus, there exists a positive constant $\mu$ such that 
\begin{equation}\label{2.8}
\|u_p\|_\infty^{p-1}\to \mu,\quad \text{as}\ p\to1.
\end{equation}
Let 
$$
\tilde{u}_p=\frac{u_p}{\|u_p\|_\infty},
$$
then $\tilde{u}_p$ solves the following equation
\begin{equation}
\begin{cases}
-\Delta \tilde{u}_p=\|u_p\|_\infty^{p-1}\tilde{u}_p^p,\quad &\text{in}\ \Omega,\\
\frac{\partial \tilde{u}_p}{\partial \nu}+\beta\tilde{u}_p=0, &\text{on}\ \partial\Omega.
\end{cases}
\end{equation}
Since $0<\tilde{u}_p\le 1$, from \eqref{2.8} and elliptic regularity, we obtain 
$$
\tilde{u}_p\to \tilde{u}\quad \text{in}\ C^2(\bar{\Omega}),
$$
where $\tilde{u}$ is a positive solution of 
\begin{equation}
\begin{cases}
-\Delta \tilde{u}=\mu\tilde{u},\quad &\text{in}\ \Omega,\\
\frac{\partial \tilde{u}}{\partial \nu}+\beta\tilde{u}=0, &\text{on}\ \partial\Omega.
\end{cases}
\end{equation}
This implies that $\mu=\lambda_{1,\beta}$ and $\tilde{u}=\phi_{1,\beta}$.
\end{proof}

\section{Uniqueness and nondegeneracy of positive solutions}

{\bf Proof of Theorem \ref{theorem1.1}:}
We first prove uniqueness. We argue by contradiction. Suppose there exists a decreasing sequence $p_n \to 1$ such that $u_n := u_{p_n}$ and $v_n := v_{p_n}$ are two distinct positive solutions of \eqref{eq0.1} with $p=p_n$. 
Let 
$$
w_n=u_n-v_n,
$$
then $w_n$ is a solution of 
\begin{equation}
\begin{cases}
-\Delta w_n=a_n(x)w_n,\quad &\text{in}\ \Omega,\\
\frac{\partial w_n}{\partial \nu}+\beta w_n=0, &\text{on}\ \partial\Omega,
\end{cases}
\end{equation}
where $a_n(x)$ is defined by 
$$
a_n(x)=p_n\int_0^1(\theta u_n+(1-\theta)v_n )^{p_n-1}d\theta.
$$
Denoting $\tilde{u}_n := \tilde{u}_{p_n}$, by Lemma \ref{lemma2.5} we have $\tilde{u}_n \to \phi_{1,\beta}$ and $\|u_n\|_\infty^{p_n-1} \to \lambda_{1,\beta}$ as $n \to \infty$.
Thus, we have 
$$
u_n^{p_n-1}=\|u_n\|_\infty^{p_n-1}\tilde{u}_{n}^{p_n-1}\to \lambda_{1,\beta}\ \text{uniformly in } \Omega,\quad  \text{as}\ n\to\infty.
$$
Similarly, we have 
$$
v_n^{p_n-1}\to \lambda_{1,\beta}\ \text{uniformly in } \Omega,\quad \text{as}\ n\to\infty.
$$
Hence, we can deduce that 
$$
a_n(x)\to \lambda_{1,\beta}\ \text{uniformly in } \Omega, \quad \text{as}\ n\to\infty.
$$
Define 
$$
\tilde{w}_n=\frac{w_n}{\|w_n\|_\infty},
$$
then $\tilde{w}_n$ solves the following equation 
\begin{equation}
\begin{cases}
-\Delta \tilde{w}_n=a_n(x)\tilde{w}_n,\quad &\text{in}\ \Omega,\\
\frac{\partial \tilde{w}_n}{\partial \nu}+\beta\tilde{w}_n=0, &\text{on}\ \partial\Omega.
\end{cases}
\end{equation}
Since $|\tilde{w}_n|\leq 1$, by elliptic regularity, we deduce that $\tilde{w}_n \to \tilde{w}$ strongly in $C^2(\bar{\Omega})$, where $\tilde{w}$ is a solution of 
\begin{equation}
\begin{cases}
-\Delta \tilde{w}=\lambda_{1,\beta}\tilde{w},\quad &\text{in}\  \Omega,\\
\frac{\partial \tilde{w}}{\partial\nu}+\beta\tilde{w}=0, &\text{on}\ \partial\Omega.
\end{cases}
\end{equation}
This implies that $\tilde{w}=\phi_{1,\beta}$. A direct computation yields 
$$
0=\int_{\Omega}[(-\Delta u_n)v_n-(-\Delta v_n)u_n]dx=\int_{\Omega}u_nv_n(u_n^{p_n-1}-v_n^{p_n-1})dx,
$$
which implies that $w_n$ must change sign; otherwise, we would have $u_n \equiv v_n$.
We observe that  $\tilde{w}_n\to \phi_{1,\beta}$  in $C^2(\bar{\Omega})$ as $n\to\infty$. Since
\[
\frac{\partial \phi_{1,\beta}}{\partial \nu}<0
\quad\text{on }\partial\Omega,
\]
it follows that
\[
\frac{\partial \tilde{w}_n}{\partial \nu}<0
\quad\text{on }\partial\Omega
\]
for $n$ sufficiently large. This, in turn, implies that $\tilde{w}_n$ is positive in $\Omega$. This is a contradiction.

We now prove nondegeneracy. Suppose that $h_n$ is a solution of 
 \begin{equation}
\begin{cases}
-\Delta h_n-p_nu_n^{p_n-1}h_n=0, \quad &\text{in}\ \Omega,\\
\frac{\partial h_n}{\partial \nu}+\beta h_n=0 &\text{on}\ \partial\Omega,
\end{cases}
\end{equation}
and $\|h_n\|_{L^2(\Omega)}=1$.  
Since 
$$
\int_{\Omega}|\nabla u_n|^2dx+\beta\int_{\partial\Omega}u_n^2dS-p_n\int_{\Omega}u_n^{p_n-1}u_ndx=(1-p_n)\int_{\Omega}u_n^{p_n}dx<0,
$$
the first eigenvalue of the linearized problem is negative. Since $h_n$ is an eigenfunction corresponding to $0$, it must change sign.

On the other hand, since 
$$
u_n^{p_n-1}=\|u_n\|_\infty^{p_n-1}\tilde{u}_n^{p_n-1}\le \|u_n\|_\infty^{p_n-1}\le C_0,
$$
by elliptic regularity, we deduce that 
$$
h_n\to h \quad \text{in}\ C^2(\bar{\Omega})
$$
where $h$ is a solution of 
\begin{equation}
\begin{cases}
-\Delta h-\lambda_{1,\beta}h=0,\quad &\text{in}\ \Omega,\\
\frac{\partial h}{\partial \nu}+\beta h=0, &\text{on}\ \partial\Omega,
\end{cases}
\end{equation}
here we used the facts that 
$$
\tilde{u}_n\to \phi_{1,\beta},\quad \|u_n\|_\infty^{p_n-1}\to \lambda_{1,\beta}.
$$
Thus, we obtain $h=\phi_{1,\beta}$. Since $h_n \to \phi_{1,\beta}$ in $C^2(\bar{\Omega})$ as $n \to \infty$ and $\phi_{1,\beta}$ has a fixed sign, we deduce as before that $h_n$ must have a fixed sign for sufficiently large $n$. This is a contradiction since $h_n$ changes sign.
\qed

In the following, we will prove Theorem \ref{theorem1.3}. In order to prove this, we first prove the following conclusion, in which the proof is inspired by  \cite{DGP,L}.

\begin{lemma}\label{lemma3.1}
Suppose that $\beta>0$, $p$ is subcritical, and $\Omega$ is a ball. If $N=2$, or $N\ge 3$ with $\beta\ge N-2$, then each positive radial solution of \eqref{eq0.1} is nondegenerate in the space of radial functions.
\end{lemma}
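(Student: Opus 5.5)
Since the Robin condition is not scale invariant, we take $\Omega=B_1$, the unit ball; for a ball of radius $R$ the same computation gives the condition $\beta R\ge N-2$. The plan is to exhibit an explicit radial solution of the linearized equation coming from the scaling invariance of $-\Delta u=u^p$. We then show that this solution violates the Robin condition under the stated assumptions on $\beta$. Since regular radial solutions of the linearized ODE form a one-dimensional space, this rules out any nontrivial radial kernel.

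\emph{Step 1 (the scaling solution).} Let $u=u(r)$ be a positive radial solution of \eqref{eq0.1} in $B_1$. Set
\[
z(r)=r u'(r)+\frac{2}{p-1}u(r).
\]
This is $\frac{d}{d\lambda}\big|_{\lambda=1}\lambda^{2/(p-1)}u(\lambda x)$, and the family $\lambda^{2/(p-1)}u(\lambda x)$ solves the same equation. Hence a direct computation gives $-\Delta z=pu^{p-1}z$ in $B_1$. Moreover $z$ is smooth and radial, with $z(0)=\frac{2}{p-1}u(0)>0$.

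\emph{Step 2 (one-dimensionality).} Suppose $h\in H^1_{rad}(B_1)$ solves \eqref{1.6}. By elliptic regularity $h$ is a smooth radial function, so it solves
\[
h''+\frac{N-1}{r}h'+pu^{p-1}h=0,\qquad h'(0)=0.
\]
By uniqueness for this singular initial value problem, regular solutions are determined by their value at $0$. In particular $h(0)=0$ forces $h\equiv0$. So if $h\not\equiv0$, then $h=c\,z$ with $c=h(0)/z(0)\neq0$, and $z$ itself must satisfy $z'(1)+\beta z(1)=0$.

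\emph{Step 3 (boundary computation).} We use $u'(1)=-\beta u(1)$ and the equation at $r=1$, namely $u''(1)=-u(1)^p-(N-1)u'(1)=-u(1)^p+(N-1)\beta u(1)$. This gives
\[
z(1)=\Big(\frac{2}{p-1}-\beta\Big)u(1),\qquad z'(1)=u'(1)+u''(1)+\frac{2}{p-1}u'(1).
\]
Hence
\[
z'(1)+\beta z(1)=\beta u(1)\,(N-2-\beta)-u(1)^p .
\]
Since $u(1)>0$ (by the Robin condition and the Hopf lemma, or simply positivity up to the boundary), we get $z'(1)+\beta z(1)<0$ whenever $\beta\ge N-2$. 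When $N=2$ this holds for every $\beta>0$. This contradicts Step 2, so $h\equiv0$.

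The only delicate point is the sign computation in Step 3. It is exactly where the hypothesis $\beta\ge N-2$ (or $\beta R\ge N-2$ for a ball of radius $R$) enters. The rest is the standard one-dimensionality of regular radial solutions of the linearized ODE.
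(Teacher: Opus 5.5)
Your proof is correct. It reaches the same degeneracy criterion as the paper by a somewhat different mechanism.

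\textbf{How the paper argues.} The paper works with $w=x\cdot\nabla u=ru'$. This function is not in the kernel; it satisfies $-\Delta w-pu^{p-1}w=2u^p$. The paper first derives the orthogonality $\int_\Omega u^p h\,dx=0$ by testing the equations for $u$ and $h$ against each other. Green's identity then gives $\int_{\partial\Omega}(\partial_\nu w+\beta w)h\,dS=0$. A separate ODE-uniqueness argument at the regular point $r=1$ shows $h(1)\neq0$, and this forces $u''(1)+(\beta+1)u'(1)=0$.

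\textbf{How you argue.} You complete $w$ to the exact kernel element $z=w+\frac{2}{p-1}u$. One-dimensionality of regular radial solutions at the origin then gives $h=cz$, so you only need to test the Robin condition on $z$. Since $u$ itself satisfies the Robin condition, $z'(1)+\beta z(1)=u''(1)+(\beta+1)u'(1)$. So both arguments test the same boundary quantity, and both lead to the criterion $u(1)^{p-1}=\beta(N-2-\beta)$.

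\textbf{What each route buys.} Your route is more direct and gives slightly more. It shows the radial kernel is at most one-dimensional, and nontrivial exactly when this identity holds, in which case it is spanned by $z$. The paper's argument only extracts the identity as a necessary condition. On the other hand, the paper avoids the singular initial value problem at $r=0$ and is phrased as a Green's-identity computation.

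\textbf{Your scaling remark.} It is accurate and worth keeping. Under $v(x)=R^{2/(p-1)}u(Rx)$ the Robin parameter transforms as $\beta\mapsto\beta R$, and both proofs give $\beta R\ge N-2$ on a ball of radius $R$. So the paper's reduction to $\Omega=B_1$ is not really without loss of generality; for balls of radius $R<1$ the stated hypothesis $\beta\ge N-2$ is not what the argument uses.
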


\begin{proof}
Without loss of generality, we assume that $\Omega=B_1$. Let $u$ be a positive radial solution of \eqref{eq0.1}. It is sufficient to prove that \eqref{1.6} has only the trivial solution $h\equiv0$ in $H^1_{rad}(\Omega)$. It follows from \eqref{eq0.1} and \eqref{1.6} that 
$$
(p-1)\int_{\Omega}u^phdx=\int_{\Omega}[(-\Delta h)u-(-\Delta u)h]dx=0.
$$
Since $p>1$, we obtain 
\begin{equation}\label{eq3.6}
\int_{\Omega}u^phdx=0.
\end{equation}
If $h\equiv 0$, the proof is complete. Otherwise, \eqref{eq3.6} implies that $h$ must change sign. 

Define the nodal set of $h$ by
$$
\mathcal{M}=\overline{\{x\in\Omega : h(x)=0\}}.
$$
%
Since $h$ is radial, we claim that $\mathcal{M}\cap \partial\Omega=\emptyset$. Indeed, if there exists $x_0 \in \mathcal{M} \cap \partial\Omega$, we have $h(1)=0$. By the Robin boundary condition, this implies $h'(1)=-\beta h(1)=0$. Since $h$ satisfies the ODE
$$\begin{cases}
-h''-\frac{N-1}rh'=pu^{p-1}(r)h&\text{in }(0,1)\\
h(1)=h'(1)=0
\end{cases}
$$
uniqueness for the Cauchy problem immediately yields $h \equiv 0$, which is a contradiction.


Define 
$$
w=x\cdot\nabla u.
$$
Since $u$ is radial, then we get 
$$
w=x\cdot \nabla u=ru^\prime(r).
$$
A direct computation yields 
$$
\Delta w=2\Delta u+x\cdot \nabla (\Delta u)=-2u^p-pu^{p-1}w.
$$
This implies that $w$ satisfies 
\begin{equation}\label{3.7}
-\Delta w-pu^{p-1}w=2u^p.
\end{equation}
Multiplying \eqref{3.7} by $h$, we get
\begin{equation}\label{3.8}
\int_{\Omega}\nabla w\nabla hdx-\int_{\partial\Omega}\frac{\partial w}{\partial\nu}hdS-\int_{\Omega}pu^{p-1}whdx=2\int_{\Omega}u^phdx.
\end{equation}
On the other hand, multiplying \eqref{1.6} by $w$, we derive 
\begin{equation}\label{3.9}
\int_{\Omega}\nabla h\nabla wdx+\int_{\partial\Omega}\beta hwdS-\int_{\Omega}pu^{p-1}hwdx=0.
\end{equation}
It follows from \eqref{eq3.6}, \eqref{3.8} and \eqref{3.9} that 
\begin{equation}\label{3.10}
\int_{\partial\Omega}\Big(\frac{\partial w}{\partial\nu}+\beta w\Big)hdS=-2\int_{\Omega}u^phdx=0.
\end{equation}
Observe that 
$$
\nabla w=\nabla (ru^\prime(r))=(ru^\prime)^\prime\frac{x}{r}=(u^\prime+ru^{\prime\prime})\frac{x}{r},
$$
then for any $x\in\partial\Omega$ we have 
$$
\frac{\partial w}{\partial\nu}+\beta w=\nabla w\cdot x+\beta w=u^{\prime\prime}(1)+(\beta+1)u^\prime(1).
$$
Since $h$ has a fixed sign on $\partial\Omega$, by \eqref{3.10} we obtain  
\begin{equation*}
u^{\prime\prime}(1)+(\beta+1)u^\prime(1)=0.
\end{equation*}
Moreover, by the Robin boundary condition, we get $u'(1) = -\beta u(1)$, which yields 
\begin{equation}\label{eq3.11}
u^{\prime\prime}(1)=-(\beta+1)u^\prime(1)=\beta(\beta+1)u(1).
\end{equation}
On the other hand, it follows from \eqref{eq0.1} that 
$$
-u^{\prime\prime}(r)-\frac{N-1}{r}u^\prime=u^p,\quad 0<r\le 1.
$$
This implies that 
\begin{equation}\label{e3.12}
u^{\prime\prime}(1)=-u^p(1)-(N-1)u^\prime(1)=[(N-1)\beta-u^{p-1}(1)]u(1).
\end{equation}
Thus, it follows from \eqref{eq3.11} and \eqref{e3.12} that 
$$
u^{p-1}(1)=(N-2-\beta)\beta\le0.
$$
This is a contradiction since $u(1)>0$. 
\end{proof}

{\bf Proof of Theorem \ref{theorem1.3}:} It follows from Lemma \ref{lemma3.1} that 
each positive radial solution is nondegenerate. Then it is sufficient to prove uniqueness. In the following, we only consider the case $N\ge 3$, for $N=2$ the argument is the same. By Corollary \ref{corollary1.2}, there exists $p_0>1$ such that uniqueness holds for $1<p<p_0$. Let $(1,p_*)$ be the maximal interval where uniqueness holds. If $p_*=\frac{N+2}{N-2 }$, then the proof is complete. Otherwise, we may assume that $p_*<\frac{N+2}{N-2}$. Since all radial solutions of \eqref{eq0.1} are nondegenerate, we deduce by the implicit function theorem that uniqueness also holds for $p=p_*$.

We argue by contradiction. Suppose there exists a decreasing sequence $\{p_n\}$ with $p_n<\frac{N+2}{N-2}$ such that $p_n\to p_*$, and equation \eqref{eq0.1} with $p=p_n$ has two distinct radial solutions $u_{p_n}$ and $v_{p_n}$. To simplify notation, we write $u_n=u_{p_n}$ and $v_n=v_{p_n}$. We first claim that there exists a constant $C_1>0$ such that $\|u_n\|_\infty \le C_1$.
If not, assuming $\|u_n\|_\infty\to\infty$ and proceeding as in the proof of Lemma \ref{lemma2.4}, we deduce that $\hat{u}_n\to\hat{u}_\infty$ in $C^2_{loc}(\Omega_\infty)$, where $\hat{u}_\infty$ is a positive solution of 
\begin{equation}\label{eq3.12}
\begin{cases}
-\Delta \hat{u}_\infty=\hat{u}_\infty^{p_*},\quad &\text{in}\ \Omega_\infty,\\
0\le\hat{u}_\infty\le 1 &\text{in}\ \Omega_\infty,\\
\frac{\hat{u}_\infty}{\partial x_N}=0, &\text{on}\ \partial\Omega_\infty \ (\text{if}\ \Omega_\infty=\R^N_+),
\end{cases}
\end{equation}
where $\Omega_\infty=\R^N$ or $\Omega_\infty=\R^N_+$. Moreover, we get that $\hat{u}_\infty\equiv 0$, which is impossible since $\hat{u}_\infty(0)=1$. Thus, the claim holds. Similarly, we obtain that $\|v_n\|_\infty$ is bounded. By elliptic regularity, both $u_n$ and $v_n$ converge to $u_{p_*}$ in $C^2(\bar{\Omega})$, since \eqref{eq0.1} has a unique radial solution $u_{p_*}$ for $p=p_*$. Let 
$$
\tilde{w}_n=\frac{u_n-v_n}{\|u_n-v_n\|_\infty},
$$
then $\tilde{w}_n$ solves the equation 
\begin{equation}
\begin{cases}
-\Delta \tilde{w}_n=a_n(x)\tilde{w}_n,\quad &\text{in}\ \Omega,\\
\frac{\partial \tilde{w}_n}{\partial \nu}+\beta \tilde{w}_n=0, &\text{on}\ \partial\Omega,
\end{cases}
\end{equation}
where $a_n(x)$ is defined by
$$
a_n(x)=p_n\int_0^1(\theta u_n+(1-\theta)v_n)^{p_n-1}d\theta.
$$
Since $a_n \to p_* u_{p_*}^{p_*-1}$ and $|\tilde{w}_n| \le 1$, by elliptic regularity we obtain that $\tilde{w}_n \to \tilde{w}_*$ strongly in $C^2(\bar{\Omega})$, where $\tilde{w}_*$ is a solution of 
\begin{equation}
\begin{cases}
-\Delta \tilde{w}_*=p_*u_{p_*}^{p_*-1}\tilde{w}_*,\quad &\text{in}\  \Omega,\\
\frac{\partial \tilde{w}_*}{\partial\nu}+\beta\tilde{w}_*=0, &\text{on}\ \partial\Omega.
\end{cases}
\end{equation}
By nondegeneracy, we obtain $\tilde{w}_*\equiv 0$. This is a contradiction since $\|\tilde{w}_*\|_\infty=1$.
\qed

Finally, we study the bounded and convex domains. Suppose that $p$ is subcritical, but $\beta$ is not fixed. Thus, the positive solutions of \eqref{eq0.1} depend on $\beta$. Since positive solutions may blow up near the boundary as $\beta\to\infty$, we turn to study the minimization problem \eqref{1.4} and prove the uniqueness of the minimizer.

{\bf Proof of Theorem \ref{Th1-4}:} One easily sees that the minimization problem \eqref{1.4} is equivalent to 
\begin{equation}\label{e3.16}
I_\beta=\inf \Big\{ \int_{\Omega}|\nabla u|^2dx+\beta\int_{\partial\Omega}u^2dS :  u\in H^1(\Omega), \ \int_{\Omega}|u|^{p+1}dx=1 \Big\}.
\end{equation}
Obviously, $I_\beta>0$. Define 
$$
I_\infty=\inf \Big\{ \int_{\Omega}|\nabla u|^2dx :  u\in H_0^1(\Omega), \ \int_{\Omega}|u|^{p+1}dx=1 \Big\}.
$$
Since $H_0^1(\Omega)\subset H^1(\Omega)$, we have 
$$
I_\beta\le I_\infty<\infty.
$$
We argue by contradiction. Suppose there exists an increasing sequence $\beta_n\to\infty$ such that the minimization  problem \eqref{e3.16} with $\beta=\beta_n$ has two distinct minimizers $u_n$ and $v_n$. By a direct computation, we obtain 
$$
\lambda_{1,\beta_n}\int_{\Omega}u_n^2dx\le\int_{\Omega}|\nabla u_n|^2dx+\beta_n\int_{\partial\Omega}u_n^2dS=I_{\beta_n}.
$$
We observe that $\lambda_{1,\beta_n}\to \lambda_{1,\infty}>0$ as $n\to\infty$, where $\lambda_{1,\infty}$ is the first eigenvalue of the Dirichlet problem. Thus, we have that $u_n$ is bounded in $H^1(\Omega)$. Moreover, there exists $u_\infty\in H^1(\Omega)$ such that 
\begin{align*}
u_n\rightharpoonup u_\infty,\quad &\text{weakly in}\ H^1(\Omega),\\
u_n\to u_\infty,\quad &\text{strongly in}\ L^{p+1}(\Omega),\\
u_n\to u_\infty,\quad &\text{strongly in}\ L^{2}(\partial\Omega).
\end{align*}
Since 
$$
\int_{\partial\Omega}u_n^2dS\le \frac{I_{\beta_n}}{\beta_n}\to 0,\quad \text{as}\ n\to\infty,
$$
we obtain $u_\infty=0$ on $\partial\Omega$. Furthermore, since 
$$
\int_{\Omega}|\nabla u_\infty|^2dx\le \underset{n\to\infty} {\lim\inf}\int_{\Omega}|\nabla u_n|^2dx\le \lim_{n\to\infty} I_{\beta_n}\le  I_\infty,
$$
and
$$
\int_{\Omega}|u_\infty|^{p+1}dx=\lim_{n\to\infty}\int_{\Omega}|u_n|^{p+1}dx=1,
$$
we find that $u_\infty$ is a minimizer of $I_\infty$. Similarly, we obtain $v_n \rightharpoonup v_\infty$ weakly in $H^1(\Omega)$, and $v_\infty$ is also a minimizer of $I_\infty$. From \cite{L}, we know that $u_\infty=v_\infty$ and that it is nondegenerate. Let 
$$
w_n=u_n-v_n,\quad \tilde{w}_n=\frac{w_n}{\|w_n\|_{H^1(\Omega)}},
$$
then we have that $\tilde{w}_n$ is a solution of 
\begin{equation*}
\begin{cases}
-\Delta \tilde{w}_n=\tilde{a}_n(x)\tilde{w}_n,\quad &\text{in}\  \Omega,\\
\frac{\partial \tilde{w}_n}{\partial \nu}+\beta_n \tilde{w}_n=0, &\text{on}\ \partial\Omega,
\end{cases}
\end{equation*}
where $\tilde{a}_n(x)$ is defined by 
$$
\tilde{a}_n(x)=p\int_0^1(\theta u_n+(1-\theta)v_n)^{p-1}d\theta.
$$
Since $\tilde{w}_n$ is bounded in $H^1(\Omega)$, there exists $\tilde{w}_\infty\in H^1(\Omega)$ such that 
\begin{align*}
\tilde{w}_n\rightharpoonup \tilde{w}_\infty,\quad &\text{weakly in}\ H^1(\Omega),\\
\tilde{w}_n\to \tilde{w}_\infty,\quad &\text{strongly in}\ L^{p+1}(\Omega),\\
\tilde{w}_n\to \tilde{w}_\infty,\quad &\text{strongly in}\ L^{2}(\partial\Omega).
\end{align*}
By direct computation, we obtain 
$$
\int_{\Omega}|\nabla \tilde{w}_n|^2dx+\beta_n\int_{\partial\Omega}\tilde{w}_n^2dS=\int_{\Omega}\tilde{a}_n(x)\tilde{w}_n^2dx=\int_{\Omega}pu_\infty^{p-1}\tilde{w}_\infty^2dx+o(1).
$$
By the properties of the first eigenvalue of the Robin problem, we have 
$$
\int_{\Omega}|\nabla \tilde{w}_n|^2dx+\beta_n\int_{\partial\Omega}\tilde{w}_n^2dS\ge \min\Big\{\frac{1}{2},\frac{\lambda_{1,\beta_n}}{2}\Big\}\|\tilde{w}_n\|_{H^1(\Omega)}^2=\min\Big\{\frac{1}{2},\frac{\lambda_{1,\infty}+o(1)}{2}\Big\}.
$$
Thus, we deduce that $\tilde{w}_\infty\neq 0$. We observe that 
$$
\int_{\partial\Omega}\tilde{w}_n^2dS\le\frac{1}{\beta_n}\int_{\Omega}pu_\infty^{p-1}\tilde{w}_\infty^2dx+o\Big(\frac{1}{\beta_n}\Big)\to 0,\quad \text{as}\ n\to\infty,
$$
which implies $\tilde{w}_\infty=0$ on $\partial \Omega$. For any $\phi\in H_0^1(\Omega)$, we have 
$$
\int_{\Omega}\nabla \tilde{w}_n\nabla\phi dx=\int_{\Omega}\tilde{a}_n(x)\tilde{w}_n\phi dx.
$$
Passing to the limit, we get 
$$
\int_{\Omega}\nabla \tilde{w}_\infty\nabla\phi dx=\int_{\Omega}pu_\infty^{p-1}\tilde{w}_\infty\phi dx.
$$
This implies that $\tilde{w}_\infty$ is a nontrivial solution of 
\begin{equation*}
\begin{cases}
-\Delta \tilde{w}_\infty=pu_\infty^{p-1}\tilde{w}_\infty,\quad &\text{in}\ \Omega,\\
\tilde{w}_\infty=0, &\text{on}\ \partial\Omega,
\end{cases}
\end{equation*}
which is a contradiction since $u_\infty$ is nondegenerate.
\qed

\section{Multiplicity of solutions for a perturbed Robin problem}

Let $\tilde{J}_\gamma$ be the functional of equation \eqref{eq1.7}, that is 
$$
\tilde{J}_\gamma (u)=\frac{1}{2}\int_{\Omega}|\nabla u|^2dx+\frac{\beta}{2}\int_{\partial\Omega}u^2dS-\frac{1}{p+1}\int_\Omega u_+^{p+1}dx-\frac{\gamma}{q+1}\int_{\Omega}u_+^{q+1}dx.
$$
From the variational structure, one can show that equation \eqref{eq1.7} has a first solution which is a local minimizer of the functional, and a second solution can be found by means of the Mountain Pass Theorem (see, e.g., \cite{ABC}). To show that equation \eqref{eq1.7} has exactly two solutions, we first establish several preliminary lemmas.

\begin{lemma}\label{lemma4.1}
Suppose that $0<q<1$ and $\beta>0$. Then:
\begin{enumerate}
\item The positive solution of 
\begin{equation}\label{l4.1}
\begin{cases}
-\Delta u=u^q,\quad &\text{in}\ \Omega,\\
u>0, &\text{in}\ \Omega,\\
\frac{\partial u}{\partial \nu}+\beta u=0, &\text{on}\ \partial \Omega,
\end{cases}
\end{equation}
is unique and nondegenerate; 
\item  If $w$ and $v$ satisfy 
\begin{equation*}
\begin{cases}
-\Delta w\ge w^q,\quad &\text{in}\ \Omega,\\
w>0, &\text{in}\ \Omega,\\
\frac{\partial w}{\partial \nu}+\beta w=0, &\text{on}\ \partial \Omega,
\end{cases}
\end{equation*}
and
\begin{equation*}
\begin{cases}
-\Delta v\le v^q,\quad &\text{in}\ \Omega,\\
v>0, &\text{in}\ \Omega,\\
\frac{\partial v}{\partial \nu}+\beta v=0, &\text{on}\ \partial \Omega,
\end{cases}
\end{equation*}
then $w\ge v$.
\end{enumerate}
\end{lemma}

\begin{proof}
We first prove assertion (1). 
Uniqueness was proved in \cite{CGL}. We now prove nondegeneracy. Let $u$ be the unique positive solution. It is sufficient to show that $\tilde{\lambda}_{1,\beta}>0$, where  $\tilde{\lambda}_{1,\beta}$ is the first eigenvalue of the linearized operator $-\Delta -qu^{q-1}$ with Robin boundary conditions.

Note that $u$ has a lower bound $c_\beta>0$, which depends on $\beta$. Moreover, we get $u^{q-1}\le c_\beta^{q-1}$. If we fixed $\beta>0$, then $c_\beta$ is a fixed constant. So $u^{q-1}\le c_\beta^{q-1}<\infty$ on the boundary.

One easily sees that the minimization problem 
$$
\min_{\psi\in H^1(\Omega)}\Big\{\frac{1}{2}\int_{\Omega}|\nabla \psi|^2dx+\frac{\beta}{2}\int_{\partial\Omega}\psi^2dS-\frac{1}{q+1}\int_{\Omega}|\psi|^{q+1}dx \Big\}
$$
is achieved. Moreover, $u$ is a minimizer. Thus, we have 
$$
\int_{\Omega}|\nabla h|^2dx+\beta\int_{\partial\Omega}h^2dS-q\int_{\Omega}u^{q-1}h^2\geq 0,\quad \forall h\in H^1(\Omega),
$$
which implies $\tilde{\lambda}_{1,\beta}\ge 0$. If $\tilde{\lambda}_{1,\beta}=0$, letting $\tilde{\phi}_{1,\beta}>0$ be the corresponding first eigenfunction, we have 
\begin{equation}\label{eq4.1}
\begin{cases}
-\Delta \tilde{\phi}_{1,\beta}-qu^{q-1}\tilde{\phi}_{1,\beta}=0,\quad &\text{in}\ \Omega,\\
\frac{\partial \tilde{\phi}_{1,\beta}}{\partial \nu}+\beta \tilde{\phi}_{1,\beta}=0 &\text{on}\ \partial\Omega.
\end{cases}
\end{equation}
Multiplying \eqref{eq4.1} by $u$ and integrating by parts, we obtain
\begin{equation}\label{eq4.2}
\int_{\Omega}\nabla \tilde{\phi}_{1,\beta}\nabla udx+\beta\int_{\partial\Omega}\tilde{\phi}_{1,\beta}udS-q\int_{\Omega}u^q\tilde{\phi}_{1,\beta}dx=0.
\end{equation}
Similarly, from \eqref{l4.1} we derive 
\begin{equation}\label{eq4.3}
\int_{\Omega}\nabla u \nabla \tilde{\phi}_{1,\beta}dx+\beta\int_{\partial\Omega}u\tilde{\phi}_{1,\beta}dS-\int_{\Omega}u^q\tilde{\phi}_{1,\beta}dx=0.
\end{equation}
It follows from these two identities that 
$$
(1-q)\int_{\Omega}u^q\tilde{\phi}_{1,\beta}dx=0,
$$
which implies 
$$
\int_{\Omega}u^q\tilde{\phi}_{1,\beta}dx=0.
$$
This is a contradiction since both $u$ and $\tilde{\phi}_{1,\beta}$ are positive. Hence, we obtain $\tilde{\lambda}_{1,\beta}>0$. 

Next, we prove assertion (2). Inspired by \cite{BK}, let $\xi(t)$ be a smooth, nondecreasing function such that $\xi(0)=0$, $\xi(t)=1$ for $t\ge1$, and $\xi(t)=0$ for $t\le 0$. Define 
$$
\xi_\varepsilon(t)=\xi\Big(\frac{t}{\varepsilon} \Big),
$$
then we get 
\begin{equation}\label{4.4}
\int_{\Omega}(-v\Delta w+w\Delta v)\xi_\varepsilon(v-w)dx\ge \int_{\Omega}wv(w^{q-1}-v^{q-1})\xi_\varepsilon(v-w)dx.
\end{equation}
A direct computation yields
\begin{align*}
\text{left hand side of \eqref{4.4}}&=\int_{\Omega}\nabla w\nabla (v\xi_\varepsilon(v-w))dx+\beta\int_{\partial\Omega}wv\xi_\varepsilon(v-w)dS\\
&\quad-\int_{\Omega}\nabla v\nabla (w\xi_\varepsilon(v-w))dx-\beta\int_{\partial\Omega}vw\xi_\varepsilon(v-w)dS\\
&=\int_{\Omega}\xi_\varepsilon(v-w)\nabla w\nabla vdx+\int_{\Omega}v\xi_\varepsilon^{\prime}(v-w)\nabla w(\nabla v-\nabla w)dx\\
&\quad-\int_{\Omega}\xi_\varepsilon(v-w)\nabla v\nabla wdx-\int_{\Omega}w\xi_\varepsilon^\prime(v-w)\nabla v(\nabla v-\nabla w)dx\\
&=\int_{\Omega}\xi_{\varepsilon}^{\prime}(v-w)(\nabla v-\nabla w)(v\nabla w-w\nabla v+v\nabla v-v\nabla v)dx\\
&=\int_{\Omega}\xi_{\varepsilon}^{\prime}(v-w)[-v|\nabla v-\nabla w|^2+(v-w)\nabla v(\nabla v-\nabla w)]dx\\
&\le\int_{\Omega}(v-w)\xi_{\varepsilon}^{\prime}(v-w)\nabla v(\nabla v-\nabla w)dx\\
&=\int_{\Omega}\nabla v\nabla [\eta_\varepsilon(v-w)]dx\\
&=\int_{\Omega}-\eta_\varepsilon(v-w)\Delta v-\beta\int_{\partial\Omega}v\eta_\varepsilon(v-w)dS,
\end{align*}
where $\eta_\varepsilon(t)=\int_0^ts\xi^\prime_\varepsilon(s) \, ds$. Since 
$$
0\le \eta_\varepsilon(t)\le \int_0^\varepsilon s\xi^\prime_\varepsilon(s)ds=\varepsilon-\int_0^\varepsilon\xi_\varepsilon(s)ds\le \varepsilon,
$$
by \eqref{4.4} we obtain 
$$
\int_{\Omega}wv(w^{q-1}-v^{q-1})\xi_\varepsilon(v-w)dx\le C\varepsilon,
$$
for some constant $C>0$. Letting $\varepsilon\to0$, we obtain 
$$
\int_{\text{meas}\{v>w\}}wv(w^{q-1}-v^{q-1})dx\le 0,
$$
which implies that $\text{meas}\{v>w\}=0$, since $s^{q-1}$ is strictly decreasing on $(0,\infty)$. Therefore, we conclude that $w\ge v$.
\end{proof}

Define 
$$
\gamma_0:=\sup\{\gamma>0: \eqref{eq1.7}\ \text{admits a solution} \}.
$$
Then we have the following conclusion.
\begin{lemma}\label{lemma4.2}
Suppose that $\beta>0$, $0<q<1$, and $p$ is subcritical. Then:
\begin{enumerate}
\item $0<\gamma_0<\infty$;
\item For all $0<\gamma<\gamma_0$, equation \eqref{eq1.7} has a minimal solution $\bar{u}_\gamma$, which is strictly increasing with respect to $\gamma$.
\end{enumerate}
\end{lemma}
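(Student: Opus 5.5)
We prove the two assertions in turn, using sub/supersolutions and the comparison principle of Lemma \ref{lemma4.1}(2).

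\emph{Positivity of $\gamma_0$.} Let $\psi>0$ be the unique solution of \eqref{1.2}. For small $\gamma$ we look for a supersolution of \eqref{eq1.7} of the form $\bar w=M\psi$. We need
\[
M\ge M^p\psi^p+\gamma M^q\psi^q .
\]
Since $\psi$ is bounded above and $p>1>q$, we first pick $M$ small so that $M^p\|\psi\|_\infty^p\le M/2$. We then take $\gamma$ small so that $\gamma M^q\|\psi\|_\infty^q\le M/2$. For a subsolution we use $\underline w=\varepsilon\phi_{1,\beta}$. The inequality
\[
\lambda_{1,\beta}\varepsilon\phi_{1,\beta}\le \gamma\varepsilon^q\phi_{1,\beta}^q
\]
holds for $\varepsilon$ small, because $\phi_{1,\beta}$ is bounded above and below by positive constants on $\bar\Omega$ (Robin condition). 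Shrinking $\varepsilon$ further gives $\underline w\le \bar w$. The standard monotone iteration for Robin problems then yields a solution, which may be taken as the minimal one above $\underline w$. Hence $\gamma_0>0$.

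\emph{Finiteness of $\gamma_0$.} We test the equation against $\phi_{1,\beta}$, as in Lemma \ref{lemma2.1}, and obtain
\[
\int_\Omega (u^p+\gamma u^q-\lambda_{1,\beta}u)\phi_{1,\beta}\,dx=0 .
\]
Minimizing $s\mapsto s^p+\gamma s^q-\lambda_{1,\beta}s$ over $s>0$ gives
\[
s^p+\gamma s^q\ge c\,\gamma^{\theta}s\quad\text{with }\theta=\tfrac{p-1}{p-q}>0,
\]
by Young's inequality applied to $s^p$ and $\gamma s^q$. So if $c\gamma^\theta>\lambda_{1,\beta}$, the integrand is strictly positive, which is a contradiction. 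Therefore $\gamma_0<\infty$.

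\emph{Minimal solution and monotonicity.} Fix $0<\gamma<\gamma_0$ and pick $\gamma'\in(\gamma,\gamma_0)$ for which a solution $u_{\gamma'}$ exists. Then $u_{\gamma'}$ is a supersolution for the problem with $\gamma$. For the subsolution we take $v_\gamma$, where $v$ is the unique solution of \eqref{l4.1} and $v_\gamma=\gamma^{1/(1-q)}v$, so that $-\Delta v_\gamma=\gamma v_\gamma^q$. Any solution $u$ of \eqref{eq1.7} satisfies
\[
-\Delta u\ge \gamma u^q ,
\]
that is, $\gamma^{-1/(1-q)}u$ is a supersolution of \eqref{l4.1}. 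Lemma \ref{lemma4.1}(2) therefore gives $u\ge v_\gamma$ for every solution $u$, and in particular $v_\gamma\le u_{\gamma'}$. Monotone iteration starting from $v_\gamma$, with the operator $-\Delta+K$ and $K$ large depending on the $L^\infty$ bound of $u_{\gamma'}$, produces a solution $\bar u_\gamma$ lying between $v_\gamma$ and $u_{\gamma'}$.

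Minimality follows by repeating the iteration with any other solution $u$ as the supersolution. This is allowed because $u\ge v_\gamma$, and by induction every iterate stays below $u$, so $\bar u_\gamma\le u$.

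For monotonicity, $\bar u_{\gamma'}$ is a supersolution for $\gamma$, so minimality gives $\bar u_\gamma\le\bar u_{\gamma'}$. The difference satisfies
\[
-\Delta(\bar u_{\gamma'}-\bar u_\gamma)+K(\bar u_{\gamma'}-\bar u_\gamma)\ge(\gamma'-\gamma)\bar u_{\gamma'}^q>0
\]
with Robin boundary conditions. The strong maximum principle and the Hopf lemma in the Robin setting then give strict inequality in $\bar\Omega$.

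The main obstacle is the lack of Lipschitz continuity of $s^q$ at $0$, which could obstruct the monotone iteration. It is handled by working above the positive lower barrier $v_\gamma$, where $s^q$ is Lipschitz on $[\min v_\gamma,\max u_{\gamma'}]$.
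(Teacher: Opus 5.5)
Your proposal is correct and follows essentially the same route as the paper: the supersolution $M\psi$ with subsolution $\varepsilon\phi_{1,\beta}$ gives $\gamma_0>0$, and testing against $\phi_{1,\beta}$ with a pointwise bound gives $\gamma_0<\infty$ (your Young-inequality bound is a quantitative version of the paper's threshold $\gamma_2=\max_{s\ge0}(\lambda_{1,\beta}s^{1-q}-s^{p-q})$); monotone iteration from $v_\gamma$ below a solution with larger parameter then produces $\bar u_\gamma$. Your explicit minimality argument supplies details the paper only asserts: every solution lies above $v_\gamma$, and so does the supersolution $\bar u_{\gamma'}$, since $\bar u_{\gamma'}\ge v_{\gamma'}\ge v_\gamma$. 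The same holds for your Hopf-lemma argument for strict monotonicity; the shift $K$ is harmless but unnecessary, because $s\mapsto s^p+\gamma s^q$ is already nondecreasing, so the non-Lipschitz behaviour of $s^q$ at $0$ is not actually an obstacle.
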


\begin{proof}
We first prove assertion (1). Let $u$ be the unique solution of \eqref{1.2}. Defining
\begin{equation}\label{m2}
M=(2\|u\|_\infty^q\gamma)^{\frac{1}{1-q}},
\end{equation}
we have 
$$
M-M^p\|u\|_\infty^p-\gamma M^q\|u\|_\infty^q=\frac{M}{2}-M^p\|u\|_\infty^p.
$$
Since $0<q<1<p$, we can find $\gamma_1>0$ such that for all $0<\gamma<\gamma_1$,
$$
M-M^p\|u\|_\infty^p-\gamma M^q\|u\|_\infty^q\ge0.
$$
Then we have 
$$
-\Delta (Mu)=M\ge (Mu)^p+\gamma (Mu)^q,
$$
which implies that $Mu$ is a supersolution of \eqref{eq1.7}. On the other hand, by \eqref{eq1.3} we get 
$$
-\Delta (\varepsilon\phi_{1,\beta})=\varepsilon\lambda_{1,\beta} \phi_{1,\beta}\le (\varepsilon \phi_{1,\beta})^p+\gamma(\varepsilon \phi_{1,\beta})^q
$$
for all sufficiently small $\varepsilon>0$. This implies that $\varepsilon\phi_{1,\beta}$ is a subsolution of \eqref{eq1.7}. Choosing $\varepsilon$ sufficiently small, we have 
$$
\varepsilon\phi_{1,\beta}<Mu.
$$ 
Thus, by the method of subsolutions and supersolutions (see, e.g., \cite{I,P}), \eqref{eq1.7} has a solution for all $\gamma\le \gamma_1$, which implies $\gamma_0\ge \gamma_1$. Since the function $\lambda_{1,\beta}s^{1-q}-s^{p-q}$ has a maximum on $[0, \infty)$, we define 
\begin{equation}\label{eq4.4}
\gamma_2=\max_{s\ge 0}\{ \lambda_{1,\beta}s^{1-q}-s^{p-q}\}.
\end{equation}
If $u_\gamma$ is a solution of \eqref{eq1.7}, we derive 
\begin{align*}
\lambda_{1,\beta}\int_{\Omega}u_\gamma\phi_{1,\beta}dx=\int_{\Omega}(-\Delta \phi_{1,\beta})u_\gamma dx=\int_{\Omega}(-\Delta u_\gamma)\phi_{1,\beta}dx=\int_{\Omega}u_\gamma^p\phi_{1,\beta}dx+\gamma\int_{\Omega}u_\gamma^q\phi_{1,\beta}dx.
\end{align*}
Moreover, from \eqref{eq4.4} we obtain 
$$
\gamma\int_{\Omega}u_\gamma^q\phi_{1,\beta}dx=\int_{\Omega}u_\gamma^q\phi_{1,\beta}(\lambda_{1,\beta}u_\gamma^{1-q}-u_\gamma^{p-q})dx\le\gamma_2\int_{\Omega}u_\gamma^q\phi_{1,\beta}dx.
$$
Hence, $\gamma\le \gamma_2$. Since $\gamma_2$ is independent of $\gamma$, we obtain $\gamma_0\le\gamma_2$.

Next, we prove assertion (2). For each $\gamma<\gamma_0$, by the definition of $\gamma_0$, there exists $\alpha\in(\gamma,\gamma_0)$ such that 
\begin{equation*}
\begin{cases}
-\Delta u=u^p+\alpha u^q,\quad &\text{in}\ \Omega,\\
u>0,  \quad &\text{in}\ \Omega,\\
\frac{\partial u}{\partial \nu}+\beta u=0, &\text{on}\ \partial\Omega,
\end{cases}
\end{equation*}
has a solution $u_\alpha$. Then $u_\alpha$ is a supersolution of \eqref{eq1.7}. It follows from assertion (1) that $\varepsilon\phi_{1,\beta}<u_\alpha$ is a subsolution for sufficiently small $\varepsilon>0$. Thus, by the method of subsolutions and supersolutions, \eqref{eq1.7} has a solution $u_\gamma$. Let $v_\gamma$ be the unique positive solution of 
\begin{equation}\label{cgl4.8}
\begin{cases}
-\Delta v=\gamma v^q,\quad &\text{in}\ \Omega,\\
v>0, &\text{in}\ \Omega,\\
\frac{\partial v}{\partial \nu}+\beta v=0, &\text{on}\ \partial \Omega,
\end{cases}
\end{equation}  
since 
$$
-\Delta u_\gamma=u_\gamma^p+\gamma u_\gamma^q>\gamma u_\gamma^q,
$$
it follows from Lemma \ref{lemma4.1} that $u_\gamma\ge v_\gamma$. We observe that $v_\gamma$ is a subsolution of \eqref{eq1.7}. 

We denote by $\langle v_\gamma, u_\gamma \rangle$ the order interval consisting of all functions
$u\in C(\overline{\Omega})$ satisfying
\[
v_\gamma\le u\le u_\gamma.
\]
Let $f(u)=u^p+\gamma u^q$. Since $f$ is nondecreasing on
$\langle v_\gamma, u_\gamma \rangle$, we can argue as in \cite{P} and construct a sequence
$\{u_k\}$ by the monotone iteration scheme
\begin{equation*}
\begin{cases}
-\Delta u_{k+1}=f(u_k), & \text{in }\Omega,\\
\dfrac{\partial u_{k+1}}{\partial \nu}+\beta u_{k+1}=0,
& \text{on }\partial\Omega,\\
u_0=v_\gamma.
\end{cases}
\end{equation*}
The sequence satisfies
\[
v_\gamma\le \cdots \le u_k\le u_{k+1}\le \cdots \le u_\gamma.
\]
Hence, $\{u_k\}$ converges increasingly to a limit $\bar u_\gamma$, which is the minimal solution of \eqref{eq1.7}. Therefore, problem \eqref{eq1.7} admits a minimal solution $\bar u_\gamma$ for every $\gamma<\gamma_0$.

Furthermore, for any $0<\gamma<\gamma^\prime<\gamma_0$, we have 
$$
-\Delta \bar{u}_{\gamma^\prime}=\bar{u}_{\gamma^\prime}^p+\gamma^\prime \bar{u}_{\gamma^\prime}>\bar{u}_{\gamma^\prime}^p+\gamma \bar{u}_{\gamma^\prime},
$$
which implies that $\bar{u}_{\gamma^\prime}$ is a supersolution of \eqref{eq1.7}. Since $\varepsilon\phi_{1,\beta}$ is a subsolution for sufficiently small $\varepsilon>0$, the method of subsolutions and supersolutions yields a solution $u_\gamma$ of \eqref{eq1.7} satisfying $\varepsilon\phi_{1,\beta}\le u_\gamma\le \bar{u}_{\gamma^\prime}$. Since $\bar{u}_\gamma$ is the minimal solution, we obtain $\bar{u}_\gamma\le u_\gamma\le \bar{u}_{\gamma^\prime}$, which shows that $\bar{u}_\gamma$ is strictly increasing with respect to $\gamma$.
\end{proof}

\begin{lemma}\label{lemma4.3}
For all $0<\gamma<\gamma_0$, equation \eqref{eq1.7} has at most one solution $u_\gamma$ satisfying
\begin{equation}\label{m3}
\|u_\gamma\|_\infty<\Big(\frac{\tilde{\lambda}_{1,\beta}}{p}\Big)^{\frac{1}{p-1}},
\end{equation}
where $\tilde{\lambda}_{1,\beta}$ is defined in Lemma \ref{lemma4.1}. Moreover, equation \eqref{eq1.7} has only the minimal solution $\bar{u}_\gamma$ when $\gamma$ is sufficiently small. 
\end{lemma}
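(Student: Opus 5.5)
The plan is an energy (Rayleigh quotient) comparison. I will show that the potential in the equation satisfied by the difference of two solutions lies strictly below the linearized potential $\tilde{\lambda}_{1,\beta}+qu^{q-1}$, where $u$ is the unique solution of \eqref{l4.1} and $\tilde{\lambda}_{1,\beta}$ is the first Robin eigenvalue of $-\Delta-qu^{q-1}$ from Lemma \ref{lemma4.1}.

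\textbf{Uniqueness below the threshold.} Let $u_1,u_2$ be two solutions of \eqref{eq1.7} satisfying \eqref{m3}, and set $w=u_1-u_2$. Then
$$-\Delta w=a(x)w \ \text{in}\ \Omega,\qquad \frac{\partial w}{\partial\nu}+\beta w=0 \ \text{on}\ \partial\Omega,$$
with
$$a(x)=\int_0^1\bigl[p\,\xi_\theta^{p-1}+\gamma q\,\xi_\theta^{q-1}\bigr]d\theta,\qquad \xi_\theta=\theta u_1+(1-\theta)u_2 .$$
Testing with $w$ gives
$$\int_\Omega|\nabla w|^2dx+\beta\int_{\partial\Omega}w^2dS=\int_\Omega a\,w^2dx.$$

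\textbf{The two terms of $a$.} I handle each term separately.

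The convex part: since $\xi_\theta\le\max\{\|u_1\|_\infty,\|u_2\|_\infty\}$, \eqref{m3} gives $p\xi_\theta^{p-1}<\tilde{\lambda}_{1,\beta}$ pointwise, with strict inequality.

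The concave part: $-\Delta u_i\ge\gamma u_i^q$, so Lemma \ref{lemma4.1}(2), applied after the scaling below, gives $u_i\ge v_\gamma$, where $v_\gamma$ solves \eqref{cgl4.8}. Hence $\xi_\theta\ge v_\gamma$. The key observation is the scaling $v_\gamma=\gamma^{\frac{1}{1-q}}u$, which gives
$$\gamma q\,\xi_\theta^{q-1}\le\gamma q\,v_\gamma^{q-1}=q\,u^{q-1},$$
and the right-hand side is independent of $\gamma$.

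Combining, for $w\not\equiv0$,
$$\int_\Omega|\nabla w|^2+\beta\int_{\partial\Omega}w^2<\tilde{\lambda}_{1,\beta}\int_\Omega w^2+\int_\Omega qu^{q-1}w^2 .$$
The variational characterization of $\tilde{\lambda}_{1,\beta}$ gives exactly the reverse inequality $\ge$. This is a contradiction, so $w\equiv0$.

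\textbf{Only the minimal solution for small $\gamma$.} I read the statement as saying that for $\gamma$ small the unique solution satisfying \eqref{m3} is $\bar u_\gamma$. By the uniqueness just proved, it suffices to show $\|\bar u_\gamma\|_\infty\to0$ as $\gamma\to0$.

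From the proof of Lemma \ref{lemma4.2}, $Mu$ is a supersolution for $\gamma<\gamma_1$, with $M$ as in \eqref{m2} and $u$ the solution of \eqref{1.2}, and $M\to0$ as $\gamma\to0$. Since $-\Delta(Mu)\ge\gamma(Mu)^q$, Lemma \ref{lemma4.1}(2) gives $v_\gamma\le Mu$. The monotone iteration starting from $v_\gamma$ therefore stays below $Mu$, so $\bar u_\gamma\le Mu$ and $\|\bar u_\gamma\|_\infty\le M\|u\|_\infty\to0$. Thus \eqref{m3} holds for $\bar u_\gamma$ when $\gamma$ is small, and any solution satisfying \eqref{m3} coincides with $\bar u_\gamma$.

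\textbf{Main obstacle.} The delicate step is the concave term. The weight $\xi^{q-1}$ blows up as $\xi\to0$, so $\gamma q\xi^{q-1}$ cannot be bounded crudely. It must be controlled by the comparison $u_i\ge v_\gamma$ from Lemma \ref{lemma4.1}(2), together with the exact scaling $v_\gamma=\gamma^{\frac{1}{1-q}}u$, which makes the resulting bound $\gamma$-independent. Two points need checking:

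1. Lemma \ref{lemma4.1}(2) is stated for the nonlinearity $w^q$ with coefficient $1$. It must be applied to the rescaled functions $\gamma^{-\frac{1}{1-q}}u_i$ and $\gamma^{-\frac{1}{1-q}}Mu$, which is legitimate because the scaling maps $-\Delta w\ge\gamma w^q$ to $-\Delta w\ge w^q$ and preserves the Robin condition.
2. The iteration in Lemma \ref{lemma4.2} is stated with upper barrier $u_\gamma$. It goes through unchanged with the supersolution $Mu$ in place of $u_\gamma$, since it only uses monotonicity of $f$ on the order interval.
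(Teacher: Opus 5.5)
Your proof is correct and follows the paper's argument: you derive a Rayleigh-quotient contradiction with $\tilde{\lambda}_{1,\beta}$, controlling the concave term through $u_i\ge v_\gamma=\gamma^{\frac{1}{1-q}}u$ from Lemma \ref{lemma4.1}(2), and you get the smallness of $\bar u_\gamma$ from the supersolution $Mu$ with $M\to0$. The only difference is cosmetic: you use the mean-value coefficient $a(x)$ for two arbitrary solutions, so the sign of $w$ plays no role, whereas the paper writes the second solution as $\bar u_\gamma+v$ with $v>0$ and applies the convexity and concavity inequalities pointwise before testing.
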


\begin{proof}
The proof is standard (see, e.g., \cite{ABC}); for the convenience of the reader, we provide a brief sketch. Suppose that equation \eqref{eq1.7} has two distinct solutions. 
By Lemma \ref{lemma4.2}, one of these solutions must be the minimal solution $\bar{u}_\gamma$. We write the second solution as $u_\gamma:=\bar{u}_\gamma+v$, where the strong maximum principle implies $v>0$. It follows from Lemma \ref{lemma4.1} that  $\bar{u}_\gamma\ge v_\gamma$, where $v_\gamma$ is the unique positive solution of \eqref{cgl4.8} Thus, by \eqref{m3} we obtain
\begin{equation}\label{eq4.7}
-\Delta v- \gamma qv_\gamma^{q-1}v\le -\Delta v-\gamma q\bar{u}_\gamma^{q-1}v\le (\bar{u}_\gamma+v)^p-\bar{u}_\gamma^p\le p(\bar{u}_\gamma+v)^{p-1}v<\tilde{\lambda}_{1,\beta}v,
\end{equation}
where we use the following facts
$$
(a+b)^q-a^q\le qa^{q-1}b, \quad (a+b)^p-b^p\le p(a+b)^{p-1},\quad \forall a,b>0,
$$
since $s^q$ is concave and $s^p$ is convex.
Let $u=\gamma^{\frac{1}{q-1}}v_\gamma$. Then $u$ is a solution of \eqref{l4.1}. Moreover, from \eqref{eq4.7} we obtain 
$$
-\Delta v-qu^{q-1}v=-\Delta v- \gamma qv_\gamma^{q-1}v<\tilde{\lambda}_{1,\beta}v.
$$
This implies 
$$
\frac{\int_{\Omega}|\nabla v|^2dx+\beta\int_{\partial \Omega}v^2dS-q\int_{\Omega}u^{q-1}v^2dx}{\int_{\Omega}v^2dx}<\tilde{\lambda}_{1,\beta},
$$
which contradicts the definition of $\tilde{\lambda}_{1,\beta}$.

From  \eqref{m2}, we observe that $M\to 0$ as $\gamma\to 0$. Then, by the method of subsolutions and supersolutions, we obtain $\|\bar{u}_\gamma\|_\infty\to 0$ as $\gamma\to 0$. Hence, $\bar{u}_\gamma$ is the unique solution when $\gamma$ is sufficiently small. 
\end{proof}

\begin{lemma}\label{lemma4.4}
Suppose that $\beta>0$, $0<q<1$, and $p$ is subcritical. If $u_\gamma$ is a solution of \eqref{eq1.7}, then there exists a constant $C$, independent of $\gamma$, such that $\|u_\gamma\|_\infty\le C$ for all $0<\gamma<\gamma_0$.
\end{lemma}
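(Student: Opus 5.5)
We argue by contradiction through a blow-up analysis, in the same spirit as Lemma \ref{lemma2.4} and the a priori bound in the proof of Theorem \ref{theorem1.3}. Suppose there are $\gamma_n\in(0,\gamma_0)$ and solutions $u_n:=u_{\gamma_n}$ of \eqref{eq1.7} with $M_n:=\|u_n\|_\infty=u_n(x_n)\to\infty$. Up to a subsequence, $\gamma_n\to\bar\gamma\in[0,\gamma_0]$; here we use $\gamma_0<\infty$ from Lemma \ref{lemma4.2}. We rescale at the natural scale of the leading nonlinearity,
$$
\hat u_n(x)=\frac{1}{M_n}u_n\Big(\frac{x}{M_n^{\frac{p-1}{2}}}+x_n\Big),\qquad x\in\Omega_n:=M_n^{\frac{p-1}{2}}(\Omega-x_n).
$$
Then $0<\hat u_n\le1$ and $\hat u_n(0)=1$. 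The rescaled function solves
$$
-\Delta\hat u_n=\hat u_n^p+\gamma_nM_n^{q-p}\hat u_n^q \ \text{in }\Omega_n,\qquad M_n^{\frac{p-1}{2}}\frac{\partial\hat u_n}{\partial\nu}+\beta\hat u_n=0 \ \text{on }\partial\Omega_n.
$$
Since $q<1<p$ and $\gamma_n$ is bounded, the perturbation term is bounded by $\gamma_0M_n^{q-p}\to0$ uniformly. So the right-hand side is uniformly bounded, and it converges locally uniformly to $\hat u^p$ once $\hat u_n$ converges.

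We then split into the same two cases as in Lemma \ref{lemma2.4}. In the first case, $M_n^{\frac{p-1}{2}}\mathrm{dist}(x_n,\partial\Omega)\to\infty$. Interior elliptic estimates give $\hat u_n\to\hat u$ in $C^2_{loc}(\R^N)$, where $\hat u$ is a nonnegative solution of $-\Delta\hat u=\hat u^p$ in $\R^N$ with $\hat u(0)=1$. Since $p$ is subcritical, the Gidas--Spruck Liouville theorem forces $\hat u\equiv0$, which is a contradiction. In the second case, the scaled distance stays bounded. We flatten the boundary with the map $\Phi$ from Lemma \ref{lemma2.4}. The boundary condition becomes $\alpha\,\partial_{y_N}\hat v_n+\beta M_n^{-\frac{p-1}{2}}\hat v_n=0$, which tends to a Neumann condition in the limit. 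As in Lemma \ref{lemma2.4} and \cite{GS1}, the scaled distance $C$ cannot be $0$, because boundary estimates control the gradient and $\hat u_n(0)=1$ is a maximum. Passing to the limit gives a solution of $-\Delta\hat v=\hat v^p$ in $\{y_N>-C\}$ with zero Neumann data and $\hat v(0)=1$. After even reflection across $\{y_N=-C\}$ this is a nonnegative entire solution, and the Liouville theorem again forces $\hat v\equiv0$, a contradiction.

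The main obstacle is the boundary case: it needs uniform $C^{1,\alpha}$ estimates up to the flattened boundary for the oblique/Robin problem whose coefficient $\beta M_n^{-\frac{p-1}{2}}$ degenerates. These estimates are what make the limit Neumann and exclude $C=0$. Since $\hat u_n\le1$, the coefficient $\beta M_n^{-\frac{p-1}{2}}\hat u_n$ tends to zero in $C^{0,\alpha}$, so standard Schauder estimates for the Neumann-type problem apply uniformly. The key point is that all these estimates are independent of $\gamma_n$: $\gamma$ enters only through the bound $\gamma_n\le\gamma_0$, and its term is multiplied by the vanishing factor $M_n^{q-p}$. This yields a constant $C$ independent of $\gamma\in(0,\gamma_0)$.
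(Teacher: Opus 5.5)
Your proof is correct and follows the paper's argument: the same blow-up at scale $\|u_n\|_\infty^{(p-1)/2}$, the perturbation $\gamma_n M_n^{q-p}\hat u_n^q$ vanishing because $\gamma_n<\gamma_0<\infty$, and a Gidas--Spruck Liouville contradiction either in $\R^N$ or, after even reflection, from the Neumann half-space limit. One quibble: your reason for excluding $C=0$ (gradient bounds plus the maximum at $0$) only gives a contradiction when the boundary values vanish, as in the Dirichlet case, so it does not apply here; the exclusion is also unnecessary, since the reflection argument works unchanged for the half-space $\{y_N>0\}$.
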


\begin{proof}
We argue by contradiction. Assume there exists a sequence of solutions $u_n=u_{\gamma_n}$ such that $\|u_n\|_\infty\to\infty$ as $n\to\infty$. Define 
$$
\hat{u}_{n}(x)=\frac{1}{\|u_{n}\|_\infty}u_n\Big(\frac{x}{\|u_{n}\|_\infty^{\frac{p-1}{2}}}+x_{n}\Big),
$$
where $x_n$ is given by
$$
u_n(x_n)=\max_{x\in\Omega}u_n(x)=\|u_n\|_\infty.
$$
Then $\hat{u}_n$ solves the following equation 
\begin{equation*}
\begin{cases}
-\Delta \hat{u}_n=\hat{u}_n^{p}+\frac{\gamma}{\|u_n\|_\infty^{p-q}}\hat{u}_n^q,\quad &\text{in}\ \Omega_n,\\
\|u_n\|_\infty^{\frac{p_n-1}{2}}\frac{\partial \hat{u}_n}{\partial \nu}+\beta \hat{u}_n=0, &\text{on}\ \partial\Omega_n,
\end{cases}
\end{equation*}
where $\Omega_n=\|u_n\|^{\frac{p-1}{2}}(\Omega-x_n)$. Since 
$$
\frac{\gamma}{\|u_n\|_\infty^{p-q}}\to 0,\quad \text{as}\ n\to\infty,
$$
proceeding as in the proof of Lemma \ref{lemma2.4}, we deduce that 
$$
\hat{u}_n\to \hat{u}_\infty\quad \text{in}\  C^2_{loc}(\Omega_\infty),
$$ 
where $\hat{u}_\infty$ is a positive solution of 
\begin{equation}
\begin{cases}
-\Delta \hat{u}_\infty=\hat{u}_\infty^{p},\quad &\text{in}\ \Omega_\infty,\\
0\le\hat{u}_\infty\le 1 &\text{in}\ \Omega_\infty,\\
\frac{\hat{u}_\infty}{\partial x_N}=0, &\text{on}\ \partial\Omega_\infty \ (\text{if}\ \Omega_\infty=\R^N_+),
\end{cases}
\end{equation}
where $\Omega_\infty=\R^N$ or $\Omega_\infty=\R^N_+$. Moreover, we obtain that $\hat{u}_\infty\equiv 0$. This is a contradiction with $\hat{u}_\infty(0)=1$.
\end{proof}

{\bf Proof of Theorem \ref{theorem1.4}:} For any $\gamma<\gamma_0$, by Lemma \ref{lemma4.2}, equation \eqref{eq1.7} has a minimal solution $\bar{u}_\gamma$. We argue by contradiction. Suppose there exists a decreasing sequence $\gamma_n\to0$ such that the problem 
\begin{equation*}
\begin{cases}
-\Delta u=u^p+\gamma_n u^q,\quad &\text{in}\ \Omega,\\
u>0,  \quad &\text{in}\ \Omega,\\
\frac{\partial u}{\partial \nu}+\beta u=0, &\text{on}\ \partial\Omega,
\end{cases}
\end{equation*}
has two distinct solutions $u_n$ and $v_n$ other than the minimal solution $\bar{u}_{\gamma_n}$. By Lemma \ref{lemma4.4}, $u_n$ and $v_n$ are bounded. By elliptic regularity, there exist $u_0$ and $v_0$ such that $u_n\to u_0$ and $v_n\to v_0$ in $C^2(\bar{\Omega})$, where $u_0$ and $v_0$ solve 
\begin{equation}\label{m5}
\begin{cases}
-\Delta u=u^p,\quad &\text{in}\ \Omega,\\
u\ge0,  \quad &\text{in}\ \Omega,\\
\frac{\partial u}{\partial \nu}+\beta u=0, &\text{on}\ \partial\Omega.
\end{cases}
\end{equation}
This implies that either $u_0>0$ or $u_0=0$. If $u_0=0$, then $u_n\to 0$ in $C^2(\bar{\Omega})$. Thus, by Lemma \ref{lemma4.3}, we obtain $u_n=\bar{u}_{\gamma_n}$ for sufficiently large $n$, which is a contradiction since $u_n\ne \bar{u}_{\gamma_n}$. This shows that $u_0>0$. Similarly, we also have $v_0>0$. Utilizing Theorem \ref{theorem1.1}, we get 
$$
u_0=u=v_0,
$$
where $u$ is the unique positive solution of \eqref{eq0.1}. Define 
$$
w_n=u_n-v_n,
$$
then $\|w_n\|_\infty>0$ and $w_n$ is a solution of 
\begin{equation}
\begin{cases}
-\Delta w_n=b_n(x)w_n,\quad &\text{in}\ \Omega,\\
\frac{\partial w_n}{\partial\nu}+\beta w_n=0, &\text{on}\ \partial\Omega,
\end{cases}
\end{equation}
where $b_n(x)$ is defined by 
$$
b_n(x)=\int_0^1[p(\theta u_n+(1-\theta)v_n)^{p-1}+\gamma_nq(\theta u_n+(1-\theta)v_n)^{q-1}]d\theta.
$$
Let 
$$
\tilde{w}_n=\frac{w_n}{\|w_n\|_\infty},
$$
we have that $\tilde{w}_n$ solves 
\begin{equation*}
\begin{cases}
-\Delta \tilde{w}_n=b_n(x)\tilde{w}_n,\quad &\text{in}\ \Omega,\\
\frac{\partial \tilde{w}_n}{\partial\nu}+\beta \tilde{w}_n=0, &\text{on}\ \partial\Omega.
\end{cases}
\end{equation*}
Since $b_n \to p u^{p-1}$ as $n \to \infty$ and $|\tilde{w}_n|\le 1$, elliptic regularity implies that $\tilde{w}_n \to \tilde{w}_0$ strongly in $C^2(\bar{\Omega})$, where $\tilde{w}_0$ solves 
\begin{equation}
\begin{cases}
-\Delta \tilde{w}_0=pu^{p-1}\tilde{w}_0,\quad &\text{in}\  \Omega,\\
\frac{\partial \tilde{w}_0}{\partial\nu}+\beta\tilde{w}_0=0, &\text{on}\ \partial\Omega.
\end{cases}
\end{equation}
By the nondegeneracy of $u$, we obtain $\tilde{w}_0\equiv 0$. This is a contradiction since $\|\tilde{w}_0\|_\infty=1$. From the previous proof it also follows that,  as $\gamma \to 0$,  the minimal solution $\bar u_\gamma \to 0$ and the second solution $u_\gamma \to u_0$, where $u_0>0$ is a solution to \eqref{m5}.
\qed

\bibliographystyle{abbrv}
\bibliography{ChenGrossiLi1.bib}

\end{document}